\documentclass[a4paper]{article}
\usepackage[T1]{fontenc}
\usepackage{lmodern}
\usepackage{amsfonts,amsmath,amssymb,amsthm}
\usepackage{graphicx} 
\usepackage{amsfonts}
\usepackage{amsmath,amssymb}
\usepackage{amsthm}
\usepackage{array}
\usepackage{csquotes}
\usepackage{enumitem}
\usepackage{graphicx}
\usepackage[dvipsnames]{xcolor}
\usepackage{mathtools}
\usepackage{aligned-overset}
\usepackage{type1cm}
\usepackage{bm}
\usepackage{caption}
\usepackage{multirow}
\usepackage[
    backend=biber,
    style=alphabetic,
    arxiv=abs,
    isbn=false,
    doi=false,
    url=false,
    giveninits=true,
    maxalphanames=8,
    minalphanames=4,
    maxnames=8,
    minnames=4,]{biblatex}
\DeclareFieldFormat[misc]{title}{{#1}}
\usepackage[unicode,bookmarksdepth=3,bookmarksnumbered,bookmarksopen,pdfencoding=auto]{hyperref}
\definecolor{citeblue}{RGB}{95,158,175}
\definecolor{linkpink}{RGB}{196,106,122}
\definecolor{urlgreen}{RGB}{110,175,85}

\hypersetup{
    colorlinks=true,
    linktocpage=true,
    citecolor=citeblue,
    linkcolor=linkpink,
    urlcolor=urlgreen
}
\usepackage[margin=30truemm]{geometry}
\usepackage{aliascnt}
\usepackage{cleveref}

\theoremstyle{plain}
\newtheorem{theorem}{Theorem}[section]
\newtheorem*{theoremintro}{Theorem}

\newaliascnt{proposition}{theorem}
\newtheorem{proposition}[proposition]{Proposition}
\aliascntresetthe{proposition}

\newaliascnt{lemma}{theorem}
\newtheorem{lemma}[lemma]{Lemma}
\aliascntresetthe{lemma}

\newaliascnt{corollary}{theorem}
\newtheorem{corollary}[corollary]{Corollary}
\aliascntresetthe{corollary}

\newaliascnt{conjecture}{theorem}

\aliascntresetthe{conjecture}

\theoremstyle{definition}
\newaliascnt{definition}{theorem}
\newtheorem{definition}[definition]{Definition}
\aliascntresetthe{definition}

\newaliascnt{example}{theorem}
\newtheorem{example}[example]{Example}
\aliascntresetthe{example}

\newaliascnt{remark}{theorem}
\newtheorem{remark}[remark]{Remark}
\aliascntresetthe{remark}
\crefname{section}{Section}{Sections}
\crefname{subsection}{Section}{Sections}
\crefname{theorem}{Theorem}{Theorems}
\crefname{definition}{Definition}{Definitions}
\crefname{proposition}{Proposition}{Propositions}
\crefname{lemma}{Lemma}{Lemmas}
\crefname{corollary}{Corollary}{Corollaries}
\crefname{example}{Example}{Examples}
\crefname{proof}{Proof}{Proof}
\crefname{remark}{Remark}{Remarks}
\crefname{equation}{Eq.}{Eq.}
\crefname{figure}{Figure}{Figures}
\crefname{table}{Table}{Tables}
\crefname{conjecture}{Conjecture}{Conjectures}

\numberwithin{equation}{section}

\newcommand{\id}{\mathrm{id}}

\allowdisplaybreaks
\title{Non-degeneracy of Hopf triplets}
\author{Tomoro Mochida\thanks{Mathematical Institute, Tohoku University, 6-3, Aramaki, Aza-Aoba, Aoba-ku, Sendai, 980-8578, Japan\\
\hspace*{1.8em}Email: \href{mailto:tomorou.mochida.r5@dc.tohoku.ac.jp}{\texttt{tomorou.mochida.r5@dc.tohoku.ac.jp}}}}
\date{}

\begin{document}

\maketitle

\begin{abstract}
    We study the relationship between semisimplicity and non-degeneracy for finite-dimensional Hopf triplets over a field of characteristic zero. The non-degeneracy condition appears in the construction of \(4\)-manifold invariants from Hopf triplets, and it was not known whether non-semisimple examples satisfying this condition exist. We prove that a Hopf triplet is non-degenerate if and only if it is semisimple.
\end{abstract}

\section{Introduction}

Hopf algebras play an important role in low-dimensional topology, providing algebraic data for the construction of many invariants of knots, \(3\)-manifolds, and \(4\)-manifolds, as well as for extensions of these invariants to topological quantum field theories. In this paper, we focus on the Kuperberg invariants of 3-manifolds~\cite{kuperberg1991involutory,kuperberg1996noninvolutory} and Chaidez, Cotler, and Cui's trisection invariants of 4-manifolds~\cite{chaidez20224,chaidez2023combed}.

Kuperberg first constructed invariants of \(3\)-manifolds using finite-dimensional involutory Hopf algebras~\cite{kuperberg1991involutory}, which are the semisimple ones in characteristic zero. He later extended this construction to invariants of framed \(3\)-manifolds using arbitrary finite-dimensional (possibly non-semisimple) Hopf algebras~\cite{kuperberg1996noninvolutory}. In these constructions, \(3\)-manifolds are represented by Heegaard diagrams, and the intersections of the upper and lower circles are encoded using the structure morphisms of Hopf algebras. Invariants constructed from non-semisimple algebraic data may detect
information that their semisimple counterparts cannot.
This possibility motivates the study of non-semisimple invariants.

We now turn to the \(4\)-dimensional case. Inspired by Kuperberg's work, Chaidez, Cotler, and Cui constructed invariants of \(4\)-manifolds from algebraic data called Hopf triplets in \cite{chaidez20224}. Their construction uses trisection diagrams, which are \(4\)-dimensional analogs of Heegaard diagrams, and follows a procedure similar to that used in the construction of the Kuperberg invariants. A Hopf triplet consists of three Hopf algebras and pairwise skew pairings satisfying certain compatibility conditions (\cref{def:hopftriplet}). In this construction, the Hopf triplet is assumed to be semisimple, meaning that all three Hopf algebras are semisimple. Later, in \cite{chaidez2023combed}, they developed a construction intended to apply to non-semisimple Hopf triplets. Their construction requires balanced Hopf triplets satisfying a non-degeneracy condition (\cref{def:nondegenerate}). They were, however, unable at the time to find any non-semisimple Hopf triplets satisfying this condition. This raises the question of whether non-degeneracy necessarily implies semisimplicity, which we address in this paper.

The non-degeneracy condition was introduced to ensure that the evaluations of the three standard stabilization trisection diagrams of \(D^4\) are nonzero, so that they can be used as normalization factors in defining the trisection invariant. The algebraic meaning of this condition is not immediately apparent from its original form. The identity in \cref{rem:nondegeneratecalc}, however, allows us to reformulate it in terms of integrals of Hopf algebras. This gives the condition an algebraic interpretation (\cref{rem:nondegenerateintegral}) and leads to the following result:

\begin{theoremintro}[\cref{thm:nondegvsss}]
    A Hopf triplet is non-degenerate if and only if all three Hopf algebras are semisimple.
\end{theoremintro}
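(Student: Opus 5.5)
The plan is to use the identity of \cref{rem:nondegeneratecalc} to rewrite each of the three normalization factors in \cref{def:nondegenerate} in terms of integrals and cointegrals, as in \cref{rem:nondegenerateintegral}. Each factor is the evaluation of one of the three standard genus-one stabilization trisection diagrams of \(D^4\). Fix for each Hopf algebra \(H_\alpha,H_\beta,H_\gamma\) a nonzero two-sided integral \(\Lambda\in H\) and a nonzero right integral \(\lambda\in H^*\), chosen compatibly with the balancing data.

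\textbf{Step 1 (expected form of the factors).} A stabilization diagram has two curves meeting once and a third curve disjoint from one of them, so its tensor network should collapse. I expect the identity of \cref{rem:nondegeneratecalc} to express the evaluation attached to each pair of Hopf algebras as a product of the form \(\varepsilon(\Lambda_\bullet)\,\lambda_\bullet(1)\) times scalars coming from the skew pairings, for example \(\lambda_\alpha(1)\,\varepsilon_\beta(\Lambda_\beta)\). The crucial bookkeeping is to check the following: as we run over the three stabilizations, every algebra \(H_\alpha,H_\beta,H_\gamma\) contributes a factor \(\varepsilon(\Lambda)\), or a factor \(\lambda(1)\), in at least one of them.

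\textbf{Step 2 (semisimple \(\Rightarrow\) non-degenerate).} Assume all three Hopf algebras are semisimple. By Maschke's theorem for Hopf algebras, \(\varepsilon(\Lambda)\ne 0\) for each of them. In characteristic zero, the Larson--Radford theorem says each is also cosemisimple, and cosemisimplicity of \(H\) is equivalent to \(\lambda(1)\ne0\). The pairing factors must then be shown to be nonzero. They reduce to evaluations of pairings on the unit and counit, which equal \(1\) by the axioms of a skew pairing. Hence all three evaluations are nonzero.

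\textbf{Step 3 (non-degenerate \(\Rightarrow\) semisimple).} Assume all three evaluations are nonzero. Then every factor in the products from Step 1 is nonzero. By Step 1, for each algebra \(H\) we obtain either \(\varepsilon(\Lambda_H)\ne0\) or \(\lambda_H(1)\ne0\). The first gives semisimplicity by Maschke. The second gives cosemisimplicity, and hence semisimplicity by Larson--Radford, since we are in characteristic zero.

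\textbf{Main obstacle.} The hardest part is Step 1: carrying the diagrammatic evaluation through the identity of \cref{rem:nondegeneratecalc} so that it genuinely factorizes. The danger is that the skew pairings might transport an integral of one algebra into another in a way that obscures the separate factors \(\varepsilon(\Lambda)\) and \(\lambda(1)\).

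My first attempt will use the standard facts that a skew pairing sends units to counits and respects multiplication. With these, the once-intersecting pair of curves contributes only \(\langle \Lambda_{(1)},\cdot\rangle\)-type terms, which the integral property collapses to \(\varepsilon(\Lambda)\). The disjoint curve then contributes \(\lambda(1)\). If the pairings turn out not to simplify fully, it is still enough to show that each evaluation is a scalar multiple of a product containing \(\varepsilon(\Lambda)\) or \(\lambda(1)\) for the appropriate algebra. That weaker form suffices for the direction non-degenerate \(\Rightarrow\) semisimple, which is the substantive one.
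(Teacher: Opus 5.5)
Your argument depends entirely on Step~1, and Step~1 is false. The evaluations do not split into factors $\varepsilon(\Lambda)$ or $\lambda(1)$ attached to individual algebras, not even up to a scalar. By \cref{rem:nondegeneratecalc}, $E_\gamma=\sigma_{\gamma\alpha}(\Lambda_{\gamma(1)},S_\alpha(\Lambda_\alpha))\,\sigma_{\beta\gamma}(S_\beta(\Lambda_\beta),\Lambda_{\gamma(2)})$. In general $E_\rho=\Lambda_\rho(uv)$, where $u=\sigma_{\rho\mu,r}(S_\mu(\Lambda_\mu))$ and $v=\sigma_{\nu\rho,l}(S_\nu(\Lambda_\nu))$ are integrals of the Hopf subalgebras $U=\sigma_{\rho\mu,r}(H_\mu^{\mathrm{op}})$ and $V=\sigma_{\nu\rho,l}(H_\nu^{\mathrm{cop}})$ of $H_\rho^*$. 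Its value depends on how $U$ and $V$ sit inside $H_\rho^*$.

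Here is a concrete counterexample. Let $H_\beta$ be non-semisimple with $H_\beta$ and $H_\beta^*$ unimodular. For example, take the $8$-dimensional Hopf algebra generated by a grouplike $c$ and elements $x_1,x_2$ with $c^2=1$, $x_i^2=0$, $x_1x_2=-x_2x_1$, $cx_i=-x_ic$, $\Delta(x_i)=x_i\otimes1+c\otimes x_i$. Put $H_\alpha=\Bbbk$ with $\sigma_{\alpha\beta},\sigma_{\gamma\alpha}$ trivial, $H_\gamma=H_\beta^{*\,\mathrm{op}}$, and $\sigma_{\beta\gamma}(b,f)=f(b)$. All distinguished grouplike elements are trivial and the triangle identity reduces to a tautology, so this is a Hopf triplet. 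Now $E_\gamma=\sigma_{\beta\gamma}(S_\beta(\Lambda_\beta),\Lambda_\gamma)$ and $E_\beta=\sigma_{\beta\gamma}(\Lambda_\beta,S_\gamma(\Lambda_\gamma))$ are integrals paired with cointegrals. They are therefore nonzero by \cref{lem:integral_property}~(\ref{lem:integral_property1}), even though $H_\beta$ and $H_\gamma$ are not semisimple. Only $E_\alpha=\varepsilon_\beta(\Lambda_\beta)\varepsilon_\gamma(\Lambda_\gamma)$ vanishes.

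Together with its cyclic relabelling, this example shows that $E_\gamma\neq0$ is compatible with any one of the three algebras being non-semisimple. So no single evaluation contains a factor detecting the (co)semisimplicity of a fixed algebra. Your weaker fallback (``a scalar multiple of a product containing $\varepsilon(\Lambda)$ or $\lambda(1)$'') is the same false claim. Step~2 is also unsupported. The pairing factors do not reduce to values on units and counits; in the example, $E_\gamma$ is an integral--cointegral pairing. You also never use the triangle identity, which the paper's proof of this direction relies on. Finally, a non-semisimple Hopf algebra need not have a nonzero two-sided integral, so your initial normalization is unavailable; the paper uses left integrals.

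The missing idea is that the three non-vanishing conditions must be used jointly. The paper proves that if $H_\rho$ is non-semisimple and $E_\mu\neq0\neq E_\nu$, then $E_\rho=0$:
\begin{enumerate}[label=(\roman*)]
\item Via (SP5), the two non-vanishing conditions give $\Lambda_\rho|_U\neq0$ and $\Lambda_\rho|_V\neq0$.
\item The Frobenius/socle argument of \cref{lem:cointonsubhopf} upgrades this to $\Lambda_\rho|_{U\cap V}\neq0$.
\item Hence $K=U\cap V$ has a nonzero cointegral with $\Lambda_\rho(1)=\varepsilon_\rho(\Lambda_\rho)=0$. So $K^*$ is non-semisimple, and hence, in characteristic zero, so is $K$.
\item The Nichols--Zoeller freeness argument of \cref{lem:intsubhopf} then forces $uv=0$.
\end{enumerate}
For the converse, $S^2=\id$ gives $\Psi_{\mu\nu}=\Phi_{\mu\nu}$, which is a Hopf algebra map by the triangle identity. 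Then $E_\rho$ is a nonzero cointegral of the image Hopf subalgebra of $H_\rho^*$ evaluated on a nonzero integral of it, which is nonzero by \cref{lem:integral_property}~(\ref{lem:integral_property1}).
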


The proof combines classical results in the theory of Hopf algebras with several auxiliary lemmas. In particular, it relies on the theory of integrals developed by Larson, Radford, and Sweedler, together with the Nichols--Zoeller freeness theorem.

The rest of the paper is organized as follows: In \cref{sec:integral}, we recall several fundamental results and prepare some lemmas for the proof of our main theorem. In \cref{sec:double}, we review generalized Drinfeld doubles and give a sufficient condition for them to be unimodular. In \cref{sec:hopftriplet}, we introduce Hopf triplets and their non-degeneracy condition and explain its interpretation in terms of integrals. We then prove the equivalence between non-degeneracy and semisimplicity of Hopf triplets.

Throughout the paper, we fix a field \(\Bbbk\) of characteristic zero and denote by \(\Bbbk^\times\) the multiplicative group of nonzero elements.
All Hopf algebras are assumed to be finite-dimensional over \(\Bbbk\), unless otherwise stated.

\subsection*{Acknowledgment}
The author would like to thank Serban Matei Mihalache and Yuji Terashima for helpful discussions.
This work is supported by JSPS KAKENHI Grant Number JP26KJ0522.

\subsection*{AI declaration}
ChatGPT-6 Astra (OpenAI) suggested \cref{lem:cointonsubhopf} and a strategy that simplified the author's original proof of \cref{thm:nondegvsss} by avoiding a case-by-case analysis. It was also used to proofread the manuscript. The author independently verified and finalized all mathematical arguments and takes full responsibility for the content of the manuscript.

\section{Integrals of Hopf algebras}\label{sec:integral}

In this section, we recall some fundamental results on integrals of Hopf algebras. Integrals play an important role in the theory of finite-dimensional Hopf algebras, and they are central to the proof of our main theorem. In fact, the proof relies heavily on the results presented in this section. A basic reference is \cite[Chapter~10]{radford2012hopf}.

Let \(H=(H,m,1,\Delta,\varepsilon,S)\) be a Hopf algebra over \(\Bbbk\). We use the Sweedler notation \(\Delta(x)=x_{(1)}\otimes x_{(2)}\) for \(x\in H\). We denote by \(G(H)\) the set of grouplike elements of \(H\), i.e., nonzero elements \(x\in H\) with \(\Delta(x)=x\otimes x\). Note that for the dual Hopf algebra \(H^*\), \(G(H^*)=\operatorname{Alg}_\Bbbk(H,\Bbbk)\).

\subsection{Actions}

Since \(H\) itself can be regarded as an \(H\)-bimodule under its multiplication, the transpose action defines both a left action and a right action of \(H\) on \(H^*\).

\begin{definition}
    A left action and a right action of \(H\) on \(H^*\) are defined as
    \begin{equation*}
        (a\rightharpoonup p)(x) \coloneqq p(xa)\quad\text{and}\quad (p\leftharpoonup a)(x) \coloneqq p(ax),
    \end{equation*}
    respectively, for \(a,x\in H\) and \(p\in H^*\). Dually, a left action and a right action of \(H^*\) on \(H\) are defined as
    \begin{equation*}
        p\rightharpoonup x \coloneqq x_{(1)}p(x_{(2)})\quad\text{and}\quad x\leftharpoonup p \coloneqq p(x_{(1)})x_{(2)},
    \end{equation*}
    respectively, for \(x\in H\) and \(p\in H^*\).
\end{definition}

\subsection{Integrals}

Let \(H\) be a Hopf algebra and \(\eta\in G(H^*)=\operatorname{Alg}_\Bbbk(H,\Bbbk)\). Set
\[
    L_\eta \coloneqq \{\Lambda\in H\mid x\Lambda=\eta(x)\Lambda \;\text{for all \(x\in H\)}\}\quad\text{and}\quad R_\eta \coloneqq \{\Lambda\in H\mid \Lambda x=\Lambda\eta(x) \;\text{for all \(x\in H\)}\}.
\]
It is shown in \cite[Proposition~3]{radford1976order} that \(L_\eta\) and \(R_\eta\) are one-dimensional (two-sided) ideals of \(H\).

\begin{definition}
    Let \(H\) be a Hopf algebra. A \emph{left} (resp. \emph{right}) \emph{integral} of \(H\) is an element of \(L_\varepsilon\) (resp. \(R_\varepsilon\)) associated with the counit \(\varepsilon\). A \emph{two-sided integral} is an element of both \(L_\varepsilon\) and \(R_\varepsilon\).
\end{definition}

Since the space \(L_\varepsilon\) is one-dimensional, there is a unique \(\chi\in G(H^*)\), called the \emph{distinguished grouplike element} of \(H^*\), such that 
\[
    \Lambda x = \Lambda\chi(x)
\]
for any \(\Lambda\in L_\varepsilon\) and \(x\in H\). Then \(L_\varepsilon=R_\chi\).

\begin{definition}
    A Hopf algebra \(H\) is \emph{unimodular} if \(L_\varepsilon=R_\varepsilon\). This is equivalent to \(\chi=\varepsilon\).
\end{definition}

We apply the same process to the dual Hopf algebra \(H^*\), but the distinguished grouplike element is defined using right integrals. That is, the distinguished grouplike element \(g\in G(H)\) of \(H\) is the unique element such that \(p\lambda=p(g)\lambda\) for any \(\lambda\in R_1\) and \(p\in H^*\), where \(R_1\) denotes the space of right integrals of \(H^*\), with \(1\in H\cong H^{**}\) identified with the counit of \(H^*\).

\begin{remark}
    Chaidez, Cotler, and Cui~\cite{chaidez2023combed} call the distinguished grouplike element of \(H\) the \emph{phase element} and denote it by \(a_H\), and call the distinguished grouplike element of \(H^*\) the \emph{cophase element} and denote it by \(\alpha_H\). In our notation, \(a_H=g_H\) and \(\alpha_H=\chi_H^{-1}\).
\end{remark}

We list several formulas involving the antipode, integrals, and distinguished grouplike elements.

\begin{lemma}\label{lem:integralss}
    Let \(H\) be a Hopf algebra, and let \(\Lambda\) be a nonzero left or right integral of \(H\). Then \(H\) is semisimple if and only if \(\varepsilon(\Lambda)\ne0\), in which case \(H\) is unimodular.
\end{lemma}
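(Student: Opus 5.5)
This is Maschke's theorem for Hopf algebras. I will treat a nonzero left integral \(\Lambda\in L_\varepsilon\) first; the right-integral case follows by applying the antipode. Since \(H\) is finite-dimensional, \(S\) is bijective, and \(S\) is an anti-algebra map with \(\varepsilon\circ S=\varepsilon\). Hence \(S\) maps \(L_\varepsilon\) onto \(R_\varepsilon\) and preserves the value \(\varepsilon(\Lambda)\).

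For the direction \(\varepsilon(\Lambda)\neq0\Rightarrow\) semisimple, I normalize so that \(\varepsilon(\Lambda)=1\). Let \(M\) be a left \(H\)-module, \(N\subseteq M\) a submodule, and \(\pi\colon M\to N\) any \(\Bbbk\)-linear projection. I average it:
\[
\tilde\pi(m)\coloneqq \Lambda_{(1)}\,\pi\bigl(S(\Lambda_{(2)})m\bigr).
\]
Two routine Sweedler computations then finish this direction.
\begin{itemize}
\item For \(n\in N\) we get \(\tilde\pi(n)=\Lambda_{(1)}S(\Lambda_{(2)})n=\varepsilon(\Lambda)n=n\).
\item The map \(\tilde\pi\) is \(H\)-linear. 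This follows from \(x\Lambda=\varepsilon(x)\Lambda\), applied in the form \(x_{(1)}\Lambda_{(1)}\otimes S(x_{(2)}\Lambda_{(2)})\)-type manipulations, i.e.\ the identity \(\Lambda_{(1)}\otimes S(\Lambda_{(2)})x = x_{(1)}\Lambda_{(1)}\otimes S(\Lambda_{(2)})\). That identity is itself derived from left-invariance of \(\Lambda\) and the antipode axioms.
\end{itemize}
So every submodule is a direct summand, and \(H\) is semisimple.

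For the converse, suppose \(H\) is semisimple. The counit makes \(\Bbbk\) a left \(H\)-module, and the kernel \(H^+=\ker\varepsilon\) is a left ideal. By semisimplicity, \(H=H^+\oplus I\) for some left ideal \(I\), with \(I\) one-dimensional. Take \(0\neq\Lambda'\in I\). Then \(x\Lambda'-\varepsilon(x)\Lambda'\) lies in \(I\cap H^+=0\), because \(\varepsilon\) is a ring map, so \(\Lambda'\) is a nonzero left integral. Moreover \(\Lambda'\notin H^+\), so \(\varepsilon(\Lambda')\neq0\). Since \(L_\varepsilon\) is one-dimensional (Radford's result quoted above), \(\Lambda\) is a scalar multiple of \(\Lambda'\), and hence \(\varepsilon(\Lambda)\neq0\). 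Applying \(S\) gives the same conclusion for right integrals.

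For unimodularity, let \(\Lambda\in L_\varepsilon\) with \(\varepsilon(\Lambda)\neq0\). By definition of the distinguished grouplike \(\chi\), we have \(\Lambda x=\chi(x)\Lambda\). Applying \(\varepsilon\) gives \(\varepsilon(\Lambda)\varepsilon(x)=\chi(x)\varepsilon(\Lambda)\), hence \(\chi=\varepsilon\) and \(H\) is unimodular.

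The only step needing care is the Sweedler identity behind the \(H\)-linearity of \(\tilde\pi\). I would verify it by expanding \(x_{(1)}\Lambda_{(1)}\otimes S(\Lambda_{(2)})S(x_{(2)})x_{(3)}\) and using left-invariance of \(\Lambda\) in the form \(x_{(1)}\Lambda_{(1)}\otimes x_{(2)}\Lambda_{(2)}=\varepsilon(x)\Lambda_{(1)}\otimes\Lambda_{(2)}\). Everything else is formal.
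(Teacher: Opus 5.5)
Your proof is correct. It is exactly the classical Larson--Sweedler (Maschke-type) argument that the paper cites rather than reproduces: you average a projection using \(\Lambda_{(1)}\otimes S(\Lambda_{(2)})\), obtain an integral from a complement of \(\ker\varepsilon\), and apply \(\varepsilon\) to \(\Lambda x=\Lambda\chi(x)\). The only slip is notational: the key identity should read \(\Lambda_{(1)}\otimes S(\Lambda_{(2)})x = x\Lambda_{(1)}\otimes S(\Lambda_{(2)})\), without the stray Sweedler index on \(x\), and this is precisely what your final expansion verifies.
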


\begin{proof}
    See \cite[Propositions~3 and 4]{larson1969associative}.
\end{proof}

\begin{lemma}\label{lem:nonssalgmap}
    Let \(H\) be a non-semisimple Hopf algebra, and \(\eta,\xi\in G(H^*)\). Then \(\eta(L_\xi)=\eta(R_\xi)=0\).
\end{lemma}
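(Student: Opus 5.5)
Fix a nonzero $\Lambda\in L_\xi$; the argument for $R_\xi$ is symmetric. The plan is to translate $\Lambda$ into a left integral by multiplying with a suitable grouplike element of $H^*$, and then apply \cref{lem:integralss}.

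Recall that $H^*$ acts on $H$ by $p\rightharpoonup x=x_{(1)}p(x_{(2)})$, and that for $\zeta\in G(H^*)=\operatorname{Alg}_\Bbbk(H,\Bbbk)$ the map $x\mapsto \zeta\rightharpoonup x$ is an algebra automorphism of $H$. Its inverse is $\zeta^{-1}\rightharpoonup(\cdot)$, where $\zeta^{-1}=\zeta\circ S$. I claim that
\[
\zeta\rightharpoonup L_\xi = L_{\xi\zeta^{-1}}
\qquad\text{or}\qquad
\zeta\rightharpoonup L_\xi = L_{\zeta\xi}
\]
for the appropriate ordering of the convolution product. To see this, let $\Lambda\in L_\xi$ and write $x=\zeta\rightharpoonup y$ with $y=\zeta^{-1}\rightharpoonup x$. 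Then
\[
x(\zeta\rightharpoonup\Lambda)=\zeta\rightharpoonup(y\Lambda)=\xi(y)\,(\zeta\rightharpoonup\Lambda),
\]
and
\[
\xi(y)=\xi\bigl(x_{(1)}\bigr)\,\zeta^{-1}\bigl(x_{(2)}\bigr)=(\xi\zeta^{-1})(x).
\]
Choosing $\zeta=\xi$ gives $\xi\rightharpoonup\Lambda\in L_\varepsilon$, a left integral of $H$. It is nonzero because $\xi\rightharpoonup(\cdot)$ is bijective.

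Next compute $\eta$ on $\Lambda$. Applying the counit to the left integral gives
\[
\varepsilon(\xi\rightharpoonup\Lambda)=\varepsilon(\Lambda_{(1)})\,\xi(\Lambda_{(2)})=\xi(\Lambda).
\]
Since $H$ is not semisimple, \cref{lem:integralss} yields $\xi(\Lambda)=0$.

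For a general $\eta$, note that $\eta(\Lambda)$ is exactly the counit of the element $\eta\rightharpoonup\Lambda\in L_{\xi\eta^{-1}}$. So it suffices to prove the following: for every $\theta\in G(H^*)$, $\varepsilon$ vanishes on $L_\theta$. Indeed, taking $\zeta=\theta$, the element $M=\theta\rightharpoonup\Lambda'$ is a nonzero left integral whenever $\Lambda'\in L_\theta$ is nonzero. We then have $\varepsilon(\Lambda')=\varepsilon(\theta^{-1}\rightharpoonup M)=\theta^{-1}(M)$.

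The remaining step is therefore to show that $\theta^{-1}(M)=0$ for a nonzero left integral $M$ and an arbitrary algebra map $\theta^{-1}$. This follows from
\[
\theta^{-1}(M)\,M=\theta^{-1}(M)\,\varepsilon\text{-scaled}\ldots
\]
More directly, the left integral property gives $xM=\varepsilon(x)M$. Applying $\theta^{-1}$ to both sides yields $\theta^{-1}(x)\theta^{-1}(M)=\varepsilon(x)\theta^{-1}(M)$ for all $x\in H$. If $\theta^{-1}(M)\neq0$, then $\theta^{-1}=\varepsilon$, and so $\theta^{-1}(M)=\varepsilon(M)=0$ by \cref{lem:integralss}, which is a contradiction. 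Hence $\theta^{-1}(M)=0$ in every case.

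This observation in fact gives a shortcut for the whole statement. For $\Lambda\in L_\xi$ and any $x\in H$, applying $\eta$ to $x\Lambda=\xi(x)\Lambda$ gives $\eta(x)\eta(\Lambda)=\xi(x)\eta(\Lambda)$. So either $\eta(\Lambda)=0$, or $\eta=\xi$. In the case $\eta=\xi$, the translation step above shows $\xi(\Lambda)=\varepsilon(\xi\rightharpoonup\Lambda)=0$.

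The only step needing care is the verification that $\xi\rightharpoonup(\cdot)$ carries $L_\xi$ onto $L_\varepsilon$, in particular getting the convolution order right. If the left hit action gives the wrong character, I will use the right action $\Lambda\leftharpoonup\xi^{-1}$ instead. For $R_\xi$, the same argument applies with the right action $(\cdot)\leftharpoonup\xi$, which sends $R_\xi$ to $R_\varepsilon$, and the case $\eta\neq\xi$ is handled by the same one-line computation.
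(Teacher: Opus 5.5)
Your argument is correct and is essentially the paper's proof. It uses the same two ingredients. The first is applying an algebra map to the defining identity of the integral. The second is transporting $L_\xi$ onto $L_\varepsilon$ by the algebra automorphism $\xi\rightharpoonup(\cdot)$, which you verify directly instead of citing Radford. These are then combined with \cref{lem:integralss}, and your ``shortcut'' only swaps the order in which the two steps are used.

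You can delete two things. One is the hedging about the convolution order, since your computations $\zeta\rightharpoonup L_\xi\subseteq L_{\xi\zeta^{-1}}$ and $R_\xi\leftharpoonup\xi\subseteq R_\varepsilon$ are already correct. The other is the unfinished line beginning $\theta^{-1}(M)\,M=\ldots$.
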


\begin{proof}
    We show that \(\eta(L_\xi)=0\). \(\eta(R_\xi)=0\) is similar.
    
    We first consider the case \(\xi=\varepsilon\). That is, we prove that any left integral of a non-semisimple Hopf algebra is sent to \(0\) by any algebra map \(H\to\Bbbk\). To see this, we apply \(\eta\) to the defining identity of an integral
    \[
        x\Lambda = \varepsilon(x)\Lambda
    \]
    for \(x\in H\) and \(\Lambda\in L_\varepsilon\).
    If \(\eta(\Lambda)\ne0\), we have \(\eta=\varepsilon\), which contradicts \(\varepsilon(\Lambda)=0\), by \cref{lem:integralss}.
    
    Now we move on to the general case. Note that for \(\xi\in G(H^*)\), the endomorphism
    \[
        L(\xi)\colon H\longrightarrow H;\quad x\longmapsto L(\xi)(x)\coloneqq\xi\rightharpoonup x
    \]
    is an algebra map.
    By \cite[Lemma~2]{radford1976order}, we have
    \[
        L(\xi^{-1})(L_\varepsilon) = \xi^{-1}\rightharpoonup L_\varepsilon = L_\xi,
    \]
    so that any element of \(L_\xi\) is written as \(L(\xi^{-1})(\Lambda)\) for some left integral \(\Lambda\in L_\varepsilon\). Since \(\eta\circ L(\xi^{-1})\colon H\to\Bbbk\) is an algebra map, the result follows from the case \(\xi=\varepsilon\).
\end{proof}

\begin{lemma}\label{lem:integral_property}
    Let \(H\) be a Hopf algebra. Let \(\Lambda\) be a nonzero left integral of \(H\), let \(\lambda\) be a nonzero right integral of \(H^*\), and let \(\chi\) be the distinguished grouplike element of \(H^*\). Then
    \begin{enumerate}[label=\textup{(\arabic*)}, ref=\arabic*]
        \item \(\lambda(\Lambda)\ne 0\).\label{lem:integral_property1}
        \item \(S(\Lambda) = \chi\rightharpoonup\Lambda\) and \(S^{-1}(\Lambda) = \Lambda\leftharpoonup\chi\).\label{lem:integral_property2}
    \end{enumerate}
\end{lemma}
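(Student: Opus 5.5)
The plan is to derive both parts from one fact about finite-dimensional Hopf algebras: for a nonzero right integral \(\lambda\) of \(H^*\), the map \(H\to H^*\), \(a\mapsto a\rightharpoonup\lambda\), is a linear isomorphism. This is the Larson--Sweedler consequence of the fundamental theorem of Hopf modules \cite[Chapter~10]{radford2012hopf}, and I would cite it rather than reprove it. Unwinding the definitions, \((a\rightharpoonup\lambda)(x)=\lambda(xa)\). So the bilinear form \((x,a)\mapsto\lambda(xa)\) on \(H\) is nondegenerate in the second variable. Since \(H\) is finite-dimensional, it is then nondegenerate in the first variable as well.

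For (\ref{lem:integral_property1}), take \(x=\Lambda\neq0\). Nondegeneracy in the first variable gives some \(a\in H\) with \(\lambda(\Lambda a)\ne0\). By the definition of \(\chi\) we have \(\Lambda a=\chi(a)\Lambda\), so \(\lambda(\Lambda a)=\chi(a)\lambda(\Lambda)\). Hence \(\lambda(\Lambda)\neq0\).

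For (\ref{lem:integral_property2}), I would first show that both \(S(\Lambda)\) and \(\chi\rightharpoonup\Lambda\) lie in the one-dimensional space \(R_\varepsilon\).
\begin{itemize}
\item For \(S(\Lambda)\): \(S(\Lambda)x=S\bigl(S^{-1}(x)\Lambda\bigr)=\varepsilon(x)S(\Lambda)\).
\item For \(\chi\rightharpoonup\Lambda\): the map \(L(\chi)=\chi\rightharpoonup(-)\) is an algebra automorphism with inverse \(L(\chi^{-1})\). Hence
\[
(\chi\rightharpoonup\Lambda)x=\chi\rightharpoonup\bigl(\Lambda(\chi^{-1}\rightharpoonup x)\bigr)=\chi(\chi^{-1}\rightharpoonup x)\,(\chi\rightharpoonup\Lambda)=\varepsilon(x)\,(\chi\rightharpoonup\Lambda).
\]
\end{itemize}
Both elements are nonzero, so \(S(\Lambda)=c\,(\chi\rightharpoonup\Lambda)\) for some \(c\in\Bbbk^\times\).

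The main obstacle is showing \(c=1\). Applying \(\varepsilon\) only gives \(\varepsilon(\Lambda)=c\,\chi(\Lambda)\), which is useless when \(H\) is not semisimple, since then both sides vanish. Instead I would pair with \(\lambda\) and normalize \(\lambda(\Lambda)=1\) using (\ref{lem:integral_property1}). Two identities are available:
\begin{itemize}
\item the right-integral identity \(x_{(1)}\lambda(x_{(2)})=\lambda(x)1\);
\item the left-integral identity \(\Lambda_{(1)}\otimes x\Lambda_{(2)}=S(x)\Lambda_{(1)}\otimes\Lambda_{(2)}\), which I would verify first.
\end{itemize}
I would substitute \(x=S(\Lambda_{(\cdot)})\)-type elements into these, aiming for a Radford-style formula such as \(\lambda(S(\Lambda))=\lambda(\chi\rightharpoonup\Lambda)\). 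Another route is to express \(1=\Lambda_{(1)}\lambda(\cdots\Lambda_{(2)})\)-type elements in two ways. If the scalar computation becomes messy, I would fall back on the known formula \(\lambda(\Lambda_{(1)})\Lambda_{(2)}\)-style identities in \cite[Chapter~10]{radford2012hopf}.

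The second formula follows from the first by applying \(S^{-1}\). A short Sweedler computation gives \(S^{-1}(p\rightharpoonup x)=S^{-1}(x)\leftharpoonup(p\circ S)\). Taking \(p=\chi\), \(x=\Lambda\) and using \(\chi\circ S=\chi^{-1}\), we obtain \(\Lambda=S^{-1}(\Lambda)\leftharpoonup\chi^{-1}\). Acting by \(\chi\) on the right then yields \(S^{-1}(\Lambda)=\Lambda\leftharpoonup\chi\).
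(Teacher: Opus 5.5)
Your part~(1) is correct, and so are the two checks in part~(2) that \(S(\Lambda)\) and \(\chi\rightharpoonup\Lambda\) lie in \(R_\varepsilon\), and the derivation of the \(S^{-1}\) formula from the \(S\) formula.

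\textbf{The gap.} It is exactly the step you flag as the main obstacle. You never prove \(c=1\). Saying you are ``aiming for'' a Radford-style formula, or would ``fall back'' on Chapter~10, is not an argument. Without it you only have \(S(\Lambda)\in\Bbbk^\times(\chi\rightharpoonup\Lambda)\), which is weaker than the stated equality.

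\textbf{A wrong identity.} The identity you list as the right-integral identity has the wrong orientation. In the paper's convention, \(\lambda p=p(1)\lambda\) for all \(p\in H^*\) means \(\lambda(x_{(1)})x_{(2)}=\lambda(x)1\). The identity \(x_{(1)}\lambda(x_{(2)})=\lambda(x)1\) characterizes left integrals of \(H^*\), so a computation built on it would not apply to your \(\lambda\).

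\textbf{How to close it.} With the correct identity, the missing step takes a few lines.
\begin{enumerate}[label=(\roman*)]
\item Normalize \(\lambda(\Lambda)=1\) and apply \(\lambda\otimes\id\) to your left-integral identity \(\Lambda_{(1)}\otimes x\Lambda_{(2)}=S(x)\Lambda_{(1)}\otimes\Lambda_{(2)}\). The left side becomes \(x\,\lambda(\Lambda_{(1)})\Lambda_{(2)}=x\). Hence \(x=\lambda(S(x)\Lambda_{(1)})\Lambda_{(2)}\) for all \(x\in H\).
\item Take \(x=\Lambda\). Since \(S(\Lambda)\) is a right integral, \(S(\Lambda)\Lambda_{(1)}=\varepsilon(\Lambda_{(1)})S(\Lambda)\). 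This gives \(\Lambda=\lambda(S(\Lambda))\Lambda\), so \(\lambda(S(\Lambda))=1\).
\item On the other hand, \(\lambda(\chi\rightharpoonup\Lambda)=\chi\bigl(\lambda(\Lambda_{(1)})\Lambda_{(2)}\bigr)=\lambda(\Lambda)=1\).
\item Pairing \(S(\Lambda)=c\,(\chi\rightharpoonup\Lambda)\) with \(\lambda\) now yields \(c=1\).
\end{enumerate}
With this inserted, your argument is a complete, self-contained proof of both parts. The paper instead simply cites Larson--Sweedler and Radford for the lemma.
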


\begin{proof}
    See \cites[Proposition~7]{larson1969associative}[Corollary~1]{radford1990group} for part~(\ref{lem:integral_property1}), and \cite[Proposition~3]{radford1994trace} for part~(\ref{lem:integral_property2}).
\end{proof}

\begin{lemma}\label{lem:cointonsubhopf}
    Let \(U\) and \(V\) be Hopf subalgebras of a Hopf algebra \(H\), and let \(\lambda\) be a nonzero left or right integral of \(H^*\). If \(\lambda|_U\ne0\) and \(\lambda|_V\ne0\), then \(\lambda|_{U\cap V}\ne0\).
\end{lemma}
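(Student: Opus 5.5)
The first step is to turn the hypothesis into a statement about integrals of the subalgebras. I would treat a right integral \(\lambda\) of \(H^*\); the left case should follow by the same argument, or by applying \(S\), which maps Hopf subalgebras to Hopf subalgebras. The defining identity \(\lambda(x_{(1)})x_{(2)}=\lambda(x)1\) is preserved when \(x\) ranges over a Hopf subalgebra \(K\subseteq H\). So \(\lambda|_K\) is a right integral of \(K^*\), possibly zero. Let \(\Lambda_K\) be a nonzero left integral of \(K\). Applying \cref{lem:integral_property}(\ref{lem:integral_property1}) to \(K\) gives
\[
\lambda|_K\ne0 \iff \lambda(\Lambda_K)\ne0 .
\]
The converse direction of this equivalence is trivial. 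The statement thus becomes: if \(\lambda(\Lambda_U)\ne0\) and \(\lambda(\Lambda_V)\ne0\), then \(\lambda(\Lambda_{U\cap V})\ne0\).

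The second step is to find a structural criterion for \(\lambda(\Lambda_K)\ne0\) that behaves well under intersections. I would use the Nichols--Zoeller theorem: \(H\) is free as a left and as a right \(K\)-module. Choose a basis to write \(H=\bigoplus_i K m_i\). The left \(K\)-invariants of \(H\) are then \(\bigoplus_i \Lambda_K m_i=\Lambda_K H\). Since \(\Lambda_H\) is left \(K\)-invariant, this gives \(\Lambda_H=\Lambda_K y\) for some \(y\in H\), and likewise \(\Lambda_H\in\Lambda_U H\), \(\Lambda_H\in\Lambda_V H\) and \(\Lambda_H\in\Lambda_{U\cap V}H\). Dually, \(\lambda\) restricts to an integral on each subalgebra.

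I would try to prove the following criterion:
\[
\lambda(\Lambda_K)\ne0 \iff \text{the map } K\to \Bbbk,\ k\mapsto\lambda(k) \text{ is non-degenerate against } \Lambda_K .
\]
An equivalent form is \(H=K\oplus\ker E_K\), where \(E_K=(\lambda\otimes\mathrm{id})\) composed with an appropriate projection is a \(K\)-bimodule conditional expectation \(E_K\colon H\to K\). Concretely, I would construct \(E_K\) by \(E_K(x)=\lambda(x\,\Lambda_K{}_{(1)})\,S(\Lambda_K{}_{(2)})\) or a close variant. I would check that it is \(K\)-linear and that \(E_K|_K=\lambda(\Lambda_K)\,\mathrm{id}_K\) up to a grouplike twist, using \cref{lem:integral_property}(\ref{lem:integral_property2}) to move \(S\) across \(\Lambda_K\). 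Then \(\lambda(\Lambda_K)\ne0\) exactly when \(E_K\) is, up to scalar, a projection onto \(K\).

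The third step is to combine the two expectations. I expect \(E_U\) and \(E_V\) to be built from the same \(\lambda\), so they should be compatible. In particular, \(E_U\circ E_V\) should map \(V\) into \(U\cap V\). One would then show that \(E_U E_V\) restricts on \(U\cap V\) to a nonzero multiple of the identity. Granting this, \(\lambda|_{U\cap V}=\lambda\circ E_U E_V|_{U\cap V}\) up to a scalar factor. This forces \(\lambda|_{U\cap V}\ne0\), because \(\lambda(1_H)\)-type normalizations of \(\lambda\) on \(U\cap V\) would otherwise vanish identically, contradicting \(\lambda(\Lambda_U)\ne0\).

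The main obstacle is making this third step precise. It requires proving that \(E_V(U)\subseteq U\cap V\), which is where the Hopf subalgebra structure (closure under \(\Delta\) and \(S\)) must enter, and checking that the scalar produced is exactly \(\lambda(\Lambda_U)\lambda(\Lambda_V)\) times a nonzero number. If the expectation route fails, my fallback is to work in \(H^*\). There \(U^*\), \(V^*\) and \((U\cap V)^*\) are quotients of \(H^*\), and the intersection \(U\cap V\) corresponds dually to the sum of the kernels \(H^*U^{\perp}+H^*V^{\perp}\) of the restriction maps. Nonvanishing of the image of the integral \(\lambda\) in the quotient by this sum would then have to be derived from nonvanishing in each separate quotient. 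The tool for this would be the freeness of \(H^*\) over the relevant coideal subalgebras.
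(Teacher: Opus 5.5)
\textbf{There is a genuine gap in Step 3.} Step 3 is where the proof has to happen, and as written it is not there. You flag \(E_V(U)\subseteq U\cap V\) as the main obstacle and leave it open. The inference you do offer is not valid. Knowing that \(E_UE_V\) acts on \(U\cap V\) as a nonzero multiple of the identity only gives \(\lambda|_{U\cap V}=c^{-1}\,\lambda\circ E_UE_V|_{U\cap V}\), which says nothing about whether this functional vanishes. The appeal to ``\(\lambda(1_H)\)-type normalizations'' does not connect to \(\lambda(\Lambda_U)\ne0\). What is needed is an identity on all of \(H\) relating \(\lambda\circ E_K\) to \(\lambda\), so that nonvanishing can be transported from \(H\) into the image of \(E_UE_V\). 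The Nichols--Zoeller discussion in Step 2 plays no role in this.

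Your fallback correctly reduces to showing \(\lambda\notin U^\perp+V^\perp\). However, freeness over coideal subalgebras does not by itself decide whether \(\lambda\) lies in a sum of two ideals. The paper closes exactly this gap with a Frobenius argument. It chooses a primitive idempotent \(e\in H^*\) with \(\lambda e=\lambda\) and notes \(\operatorname{Soc}(H^*e)=\Bbbk\lambda\). Hence for a two-sided ideal \(I\), one has \(\lambda\in I\) iff \(Ie\neq0\), a condition visibly stable under sums.

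\textbf{How your main route can be completed.} The expectation route does work with two short identities, and it would give a more elementary alternative to the paper's socle argument.

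First, apply \(\lambda(z_{(1)})z_{(2)}=\lambda(z)1\) to \(z=x\Lambda_{K(1)}\) and multiply on the right by \(S(\Lambda_{K(2)})\). This gives
\[
E_K(x)=\lambda(x\Lambda_{K(1)})S(\Lambda_{K(2)})=\lambda(x_{(1)}\Lambda_K)\,x_{(2)}.
\]
So \(E_K\) maps every subcoalgebra into itself. Hence \(E_U(V)\subseteq U\cap V\), and \(E_K|_K=\lambda(\Lambda_K)\id_K\) with no twist.

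Second, \(\lambda\circ S\) satisfies \(z_{(1)}\lambda(S(z_{(2)}))=\lambda(S(z))1\). This yields \(\lambda\circ E_K=\lambda(S(\Lambda_K))\,\lambda\) on all of \(H\). Evaluating on \(K\) shows this scalar equals \(\lambda(\Lambda_K)\), which is nonzero when \(\lambda|_K\ne0\) by \cref{lem:integral_property}~(\ref{lem:integral_property1}).

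Combining these, \(\lambda\circ E_U\circ E_V=\lambda(\Lambda_U)\lambda(\Lambda_V)\,\lambda\ne0\), while \(E_UE_V(H)\subseteq E_U(V)\subseteq U\cap V\). This proves the lemma, and the scalar is exactly the one you predicted. These two identities are precisely what your proposal leaves unproved.
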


\begin{proof}
    Set \(M=H^*\), and consider the annihilators
    \[
        U^\perp\coloneqq\{p\in M\mid p(x)=0\;\text{for all \(x\in U\)}\}\quad\text{and}\quad V^\perp\coloneqq\{p\in M\mid p(y)=0\;\text{for all \(y\in V\)}\}
    \]
    of \(U\) and \(V\), respectively. They are two-sided ideals of \(M\). Since \((U\cap V)^\perp=U^\perp+V^\perp\), it now suffices to show that \(\lambda\notin U^\perp\) and \(\lambda\notin V^\perp\) imply \(\lambda\notin U^\perp+V^\perp\).

    Since the space of integrals is one-dimensional, there is \(h\in G(M^*)=\operatorname{Alg}_\Bbbk(M,\Bbbk)\) such that \(\lambda p = \lambda h(p)\) for all \(p\in M\). We fix a primitive idempotent \(e\in M\) such that \(h(e)=1\). Such an idempotent indeed exists by writing \(1\) as a sum of primitive orthogonal idempotents and applying \(h\). Then \(\lambda e=\lambda\).

    Since a finite-dimensional Hopf algebra is Frobenius~\cite[Remark]{larson1969associative}, the socle of the projective indecomposable left \(M\)-module \(Me\) is simple. Because \(\Bbbk\lambda\subseteq Me\) is a one-dimensional submodule, \(\operatorname{Soc}(Me)=\Bbbk\lambda\). Hence, any nonzero left submodule of \(Me\) contains \(\lambda\).

    Now \(U^\perp e\subseteq Me\) is a left submodule. If \(U^\perp e\ne0\), then \(\lambda\in U^\perp e\subseteq U^\perp\), which contradicts the assumption. So, we have \(U^\perp e=0\), and similarly, \(V^\perp e=0\). Therefore, \((U^\perp+V^\perp)e=0\), which implies \(\lambda=\lambda e\notin U^\perp+V^\perp\).
\end{proof}

\begin{lemma}\label{lem:intsubhopf}
    Let \(U\) and \(V\) be Hopf subalgebras of a Hopf algebra \(H\) such that \(K\coloneqq U\cap V\) is non-semisimple. Then for any left or right integral \(u\) of \(U\) and any left or right integral \(v\) of \(V\), we have \(uv=0\). 
\end{lemma}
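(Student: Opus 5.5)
The plan is to reduce everything to integrals of \(K\), using the Nichols--Zoeller freeness theorem, and then to apply \cref{lem:nonssalgmap} to \(K\).

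First, any nonzero left or right integral of a Hopf algebra spans a one-dimensional two-sided ideal, namely \(L_\varepsilon=R_\chi\) or \(R_\varepsilon=L_{\chi^{-1}}\). So there are algebra maps \(\eta\in G(U^*)\) and \(\xi\in G(V^*)\) such that
\[
uk=\eta(k)u\quad\text{for all }k\in U,\qquad kv=\xi(k)v\quad\text{for all }k\in V.
\]
Restricting to \(K\) gives algebra maps \(\eta|_K,\xi|_K\in G(K^*)\), with \(uk=\eta(k)u\) and \(kv=\xi(k)v\) for \(k\in K\). We may assume \(u\ne0\) and \(v\ne0\), since otherwise \(uv=0\) trivially.

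Next we use freeness. By the Nichols--Zoeller theorem, \(U\) is free as a right \(K\)-module, say \(U=\bigoplus_i x_iK\). Write \(u=\sum_i x_ik_i\) with \(k_i\in K\). The relation \(uk=\eta(k)u\) gives \(\sum_i x_i(k_ik-\eta(k)k_i)=0\). Freeness then forces \(k_ik=\eta(k)k_i\) for every \(i\), that is, \(k_i\in R_{\eta|_K}(K)\). Since this space is one-dimensional, spanned by some \(r\), we get \(u=w'r\) for some \(w'\in U\).

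Symmetrically, \(V\) is free as a left \(K\)-module, \(V=\bigoplus_j Kb_j\). Writing \(v=\sum_j l_jb_j\), the relation \(kv=\xi(k)v\) gives \(l_j\in L_{\xi|_K}(K)\) by the same argument. Hence \(v=lw\) for some \(w\in V\), where \(l\) spans \(L_{\xi|_K}(K)\).

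Finally we compute the product:
\[
uv=w'\,r\,l\,w=\xi(r)\,w'lw .
\]
Here \(rl=\xi(r)l\) because \(l\in L_{\xi|_K}(K)\). Since \(K\) is non-semisimple and \(\xi|_K\in G(K^*)\), \cref{lem:nonssalgmap} applied to \(K\) gives \(\xi(R_{\eta|_K}(K))=0\). Therefore \(\xi(r)=0\), and so \(uv=0\).

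The main care point is the bookkeeping in the freeness step. We must use the correct side of freeness for each factor: right \(K\)-freeness for \(U\), because \(u\) is a right semi-invariant, and left \(K\)-freeness for \(V\), because \(v\) is a left semi-invariant. Nichols--Zoeller supplies both, since freeness on one side implies freeness on the other by applying the antipode. We must also note that the characters in the relations are only known to be algebra maps, not necessarily \(\varepsilon\). This is exactly why the general form of \cref{lem:nonssalgmap}, with arbitrary \(\eta,\xi\), is needed rather than \cref{lem:integralss} alone.
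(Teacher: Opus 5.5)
Your argument is correct and is essentially the paper's proof: the paper also uses Nichols--Zoeller left \(K\)-freeness of \(V\) to write \(v=\sum_i v_ie_i\) with \(v_i\in L_{\xi}(K)\), and then applies \cref{lem:nonssalgmap} to \(K\). The only difference is that your decomposition of \(u\) via right \(K\)-freeness of \(U\) is redundant: once \(v=lw\) with \(l\in L_{\xi|_K}(K)\subseteq K\), you already have \(uv=ulw=\eta(l)\,uw=0\), and this is exactly how the paper finishes.
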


\begin{proof}
    We may assume that \(u\ne0\) and \(v\ne0\). Since the space of integrals is one-dimensional, there are \(\eta,\xi\in G(K^*)=\operatorname{Alg}_\Bbbk(K,\Bbbk)\) such that
    \[
        ux = u\eta(x)\quad\text{and}\quad xv=\xi(x)v
    \]
    for all \(x\in K\).

    By the Nichols--Zoeller theorem~\cite[Theorem~7]{nichols1989hopf}, \(V\) is free as a left \(K\)-module. Let \(e_1,\ldots,e_n\in V\) be a basis of \(V\) as a left \(K\)-module, and write \(v\in V\) as
    \[
        v = \sum_{i=1}^n v_ie_i, \quad v_i\in K.
    \]
    It then follows from the identity \(xv=\xi(x)v\) for all \(x\in K\) that
    \[
        xv_i = \xi(x)v_i 
    \]
    for all \(x\in K\), and hence \(v_i\in L_\xi = \{\Lambda\in K\mid x\Lambda=\xi(x)\Lambda\;\text{for all \(x\in K\)}\}\).

    Now,
    \[
        uv = \sum_{i=1}^n uv_ie_i = \sum_{i=1}^nu\eta(v_i)e_i = 0,
    \]
    since \(\eta(v_i)=0\) by \cref{lem:nonssalgmap}.
\end{proof}

\section{Generalized Drinfeld doubles}\label{sec:double}

Quasitriangular Hopf algebras form an important class of Hopf algebras, as their close connection to the (quantum) Yang--Baxter equation gives rise to various invariants in low-dimensional topology. In this section, we review a slight generalization of the Drinfeld double construction.

\begin{definition}
    Let \(H_\alpha\) and \(H_\beta\) be Hopf algebras. A linear map \(\sigma\colon H_\alpha\otimes H_\beta\to\Bbbk\) is a \emph{skew pairing} if it satisfies the following conditions:
    \begin{enumerate}[label=(SP\arabic*),leftmargin=*,ref=SP\arabic*]
        \item \(\sigma(aa',b) = \sigma(a,b_{(1)})\sigma(a',b_{(2)})\),
        \item \(\sigma(a,bb') = \sigma(a_{(2)},b)\sigma(a_{(1)},b')\),
        \item \(\sigma(a,1_\beta) = \varepsilon_\alpha(a)\),
        \item \(\sigma(1_\alpha,b) = \varepsilon_\beta(b)\),
        \item \(\sigma(S_\alpha(a),b) = \sigma(a,S_\beta^{-1}(b))\)\label{def:skewpairing5}
    \end{enumerate}
    for all \(a,a'\in H_\alpha\) and \(b,b'\in H_\beta\). This is equivalent to saying that the induced map \(\sigma_l\colon H_\alpha\to H_\beta^{*\,\mathrm{cop}}\) is a Hopf algebra map, or equivalently, that \(\sigma_r\colon H_\beta\to H_\alpha^{*\,\mathrm{op}}\) is a Hopf algebra map.

    We call the triple \((H_\alpha,H_\beta,\sigma)\) a \emph{skew pair}.
\end{definition}

\begin{example}\label{ex:usualskewpair}
    The canonical evaluation pairing between a Hopf algebra \(H\) and its dual \(H^*\) defines a skew pairing \(\langle\cdot,\cdot\rangle\colon H^{*\,\mathrm{cop}}\otimes H\to\Bbbk\).
\end{example}

We can construct a new Hopf algebra out of a skew pair:

\begin{definition}
    Let \((H_\alpha,H_\beta,\sigma)\) be a skew pair. The \emph{generalized Drinfeld double} \(D_\sigma(H_\alpha,H_\beta)\) is a Hopf algebra defined as follows: As a coalgebra, \(D_\sigma(H_\alpha,H_\beta)=H_\alpha\otimes H_\beta\). The multiplication, unit, and antipode are defined as
    \begin{align*}
        (a\otimes b)(a'\otimes b') &\coloneqq \sigma(a'_{(1)},b_{(1)})\sigma(S_\alpha(a'_{(3)}),b_{(3)})aa'_{(2)}\otimes b_{(2)}b',\\
        1_{D_\sigma(H_\alpha,H_\beta)} &\coloneqq 1_\alpha\otimes 1_\beta,\\
        S(a\otimes b) &\coloneqq (1_\alpha\otimes S_\beta(b))(S_\alpha(a)\otimes 1_\beta),
    \end{align*}
    respectively, for \(a,a'\in H_\alpha\) and \(b,b'\in H_\beta\).
\end{definition}

\begin{example}
    For the skew pair \((H^{*\,\mathrm{cop}},H,\langle\cdot,\cdot\rangle)\) in \cref{ex:usualskewpair}, the corresponding generalized Drinfeld double is the usual Drinfeld double \(D(H)\) of \(H\).
\end{example}

\begin{lemma}[{\cite[Proposition~2.4]{doi1994multiplication}}]\label{lem:drinfeldmap}
    Let \((H_\alpha,H_\beta,\sigma)\) be a skew pair and \(H\) be a Hopf algebra. For any Hopf algebra maps \(f\colon H_\alpha\to H\) and \(g\colon H_\beta\to H\), the map
    \begin{equation*}
        \Phi\colon D_\sigma(H_\alpha,H_\beta)\longrightarrow H;\quad a\otimes b\longmapsto f(a)g(b)
    \end{equation*}
    is a Hopf algebra map if and only if \(f\) and \(g\) satisfy
    \begin{equation}\label{eq:drinfeldmap}
        \sigma(a_{(1)},b_{(1)})f(a_{(2)})g(b_{(2)})=g(b_{(1)})f(a_{(1)})\sigma(a_{(2)},b_{(2)})
    \end{equation}
    for all \(a\in H_\alpha\) and \(b\in H_\beta\).
\end{lemma}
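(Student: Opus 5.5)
The plan is to check each part of the Hopf algebra map condition on \(\Phi\) in turn. The coalgebra, unit and antipode conditions hold for any \(f,g\); multiplicativity reduces to a single commutation rule, and that rule is equivalent to \eqref{eq:drinfeldmap} by a convolution-inverse argument for \(\sigma\).

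\emph{Step 1: coalgebra and unit.} The coalgebra structure of \(D_\sigma(H_\alpha,H_\beta)\) is the tensor product one. Since \(f\), \(g\) are coalgebra maps and the multiplication of \(H\) is a coalgebra map, we get
\[
\Delta(f(a)g(b))=f(a_{(1)})g(b_{(1)})\otimes f(a_{(2)})g(b_{(2)})=(\Phi\otimes\Phi)\Delta(a\otimes b).
\]
Also \(\varepsilon(f(a)g(b))=\varepsilon_\alpha(a)\varepsilon_\beta(b)\) and \(\Phi(1\otimes1)=1\). So \(\Phi\) is always a unital coalgebra map. A bialgebra map between Hopf algebras automatically commutes with the antipodes. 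Hence \(\Phi\) is a Hopf algebra map if and only if it is multiplicative.

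\emph{Step 2: reducing multiplicativity.} Using (SP3) and (SP4) in the product formula, we get \((a\otimes1)(a'\otimes1)=aa'\otimes1\), \((1\otimes b)(1\otimes b')=1\otimes bb'\) and \((a\otimes1)(1\otimes b)=a\otimes b\). Every element \(a\otimes b\) is therefore the product \((a\otimes 1)(1\otimes b)\), and multiplicativity of \(\Phi\) holds if and only if it holds on products \((1\otimes b)(a\otimes 1)\). By the product formula this condition reads
\begin{equation*}
\sigma(a_{(1)},b_{(1)})\,\sigma(S_\alpha(a_{(3)}),b_{(3)})\,f(a_{(2)})g(b_{(2)}) = g(b)f(a)\qquad(\ast)
\end{equation*}
for all \(a\in H_\alpha\) and \(b\in H_\beta\). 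Here I use that \(f\), \(g\) are themselves algebra maps.

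\emph{Step 3: \((\ast)\) is equivalent to \eqref{eq:drinfeldmap}.} The key identities come from (SP1) and (SP4):
\[
\sigma(a_{(1)},b_{(1)})\sigma(S_\alpha(a_{(2)}),b_{(2)})=\sigma(a_{(1)}S_\alpha(a_{(2)}),b)=\varepsilon_\alpha(a)\varepsilon_\beta(b),
\]
and likewise \(\sigma(S_\alpha(a_{(1)}),b_{(1)})\sigma(a_{(2)},b_{(2)})=\varepsilon_\alpha(a)\varepsilon_\beta(b)\). So \(\sigma\) and \(\sigma\circ(S_\alpha\otimes\id)\) are mutually convolution inverse.

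For \eqref{eq:drinfeldmap}\(\Rightarrow(\ast)\), substitute \eqref{eq:drinfeldmap} into the first two tensor legs of the left side of \((\ast)\). This gives
\[
g(b_{(1)})f(a_{(1)})\,\sigma(a_{(2)},b_{(2)})\,\sigma(S_\alpha(a_{(3)}),b_{(3)}),
\]
and the last two factors collapse to counits, leaving \(g(b)f(a)\).

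For \((\ast)\Rightarrow\)\eqref{eq:drinfeldmap}, apply \((\ast)\) to the right side of \eqref{eq:drinfeldmap}, keeping the extra factor \(\sigma(a_{(2)},b_{(2)})\). This gives
\[
\sigma(a_{(1)},b_{(1)})\,\sigma(S_\alpha(a_{(3)}),b_{(3)})\,\sigma(a_{(4)},b_{(4)})\,f(a_{(2)})g(b_{(2)}).
\]
The second identity removes the middle two \(\sigma\)'s, yielding the left side of \eqref{eq:drinfeldmap}.

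The only delicate point is the index bookkeeping in Sweedler notation, specifically making sure the legs of \(a\) and \(b\) pair up in the correct order (the coproduct of \(b\) is not reversed here). This is handled by (SP1) as written, so I expect no real obstacle.
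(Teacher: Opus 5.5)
Your proposal is correct and follows essentially the same route as the paper. \(\Phi\) is automatically a unital coalgebra map, and multiplicativity is shown equivalent to \eqref{eq:drinfeldmap} by the same two Sweedler computations, collapsing \(\sigma(a_{(2)},b_{(2)})\sigma(S_\alpha(a_{(3)}),b_{(3)})\) and \(\sigma(S_\alpha(a_{(3)}),b_{(3)})\sigma(a_{(4)},b_{(4)})\) to counits; your reduction to the mixed products \((1\otimes b)(a\otimes 1)\) is just a repackaging of the paper's direct computation of \(\Phi((a\otimes b)(a'\otimes b'))\) and of its two-way evaluation of \(\Phi((1_\alpha\otimes b_{(1)})(a_{(1)}\otimes 1_\beta)\sigma(a_{(2)},b_{(2)}))\).
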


\begin{proof}
    The map \(\Phi\) is a coalgebra map and preserves the unit. Thus it remains to check multiplicativity.
    We first show the `if' part. We calculate
    \begin{align*}
        \MoveEqLeft\Phi((a\otimes b)(a'\otimes b'))\\
        &= \Phi(\sigma(a'_{(1)},b_{(1)})\sigma(S_\alpha(a'_{(3)}),b_{(3)})aa'_{(2)}\otimes b_{(2)}b')\\
        &= \sigma(a'_{(1)},b_{(1)})\sigma(S_\alpha(a'_{(3)}),b_{(3)})f(a)f(a'_{(2)})g(b_{(2)})g(b')\\
        &= \sigma(a'_{(2)},b_{(2)})\sigma(S_\alpha(a'_{(3)}),b_{(3)})f(a)g(b_{(1)})f(a'_{(1)})g(b') && \text{(\cref{eq:drinfeldmap})}\\
        &= f(a)g(b)f(a')g(b') = \Phi(a\otimes b)\Phi(a'\otimes b').
    \end{align*}
    
    For the `only if' part, using the assumption that \(\Phi\) is an algebra map, we calculate \(\Phi((1_\alpha\otimes b_{(1)})(a_{(1)}\otimes 1_\beta)\sigma(a_{(2)},b_{(2)}))\) in two ways. On the one hand,
    \begin{align*}
        \MoveEqLeft\Phi((1_\alpha\otimes b_{(1)})(a_{(1)}\otimes 1_\beta)\sigma(a_{(2)},b_{(2)}))\\
        &= \Phi(1_\alpha\otimes b_{(1)})\Phi(a_{(1)}\otimes 1_\beta)\sigma(a_{(2)},b_{(2)})\\
        &= g(b_{(1)})f(a_{(1)})\sigma(a_{(2)},b_{(2)}),
        \intertext{and on the other hand,}
        \MoveEqLeft\Phi((1_\alpha\otimes b_{(1)})(a_{(1)}\otimes 1_\beta)\sigma(a_{(2)},b_{(2)}))\\
        &= \sigma(a_{(1)},b_{(1)})\Phi(a_{(2)}\otimes b_{(2)})\sigma(S_\alpha(a_{(3)}),b_{(3)})\sigma(a_{(4)},b_{(4)})\\
        &= \sigma(a_{(1)},b_{(1)})f(a_{(2)})g(b_{(2)}).\qedhere
    \end{align*}
\end{proof}

The next theorem generalizes the well-known fact that the Drinfeld double of a Hopf algebra is unimodular~\cites[Theorem~4]{radford1993minimal}[Proposition~7.4]{hennings1996invariants}:

\begin{theorem}\label{thm:drinfeldunimodular}
    Let \((H_\alpha,H_\beta,\sigma)\) be a skew pair. Let \(\Lambda_\alpha\) be a nonzero left integral of \(H_\alpha\), and let \(\Lambda'_\beta\) be a nonzero right integral of \(H_\beta\). Suppose that \(\sigma\) preserves the distinguished grouplike elements, i.e.,
    \begin{equation}\label{eq:skewcondition}
        \sigma(g_\alpha,b) = \chi_\beta(b)\quad\text{and}\quad \sigma(a,g_\beta) = \chi_\alpha^{-1}(a)
    \end{equation}
    for all \(a\in H_\alpha\) and \(b\in H_\beta\). Then the generalized Drinfeld double \(D_\sigma(H_\alpha,H_\beta)\) is unimodular with a two-sided integral \(\Lambda_\alpha\otimes\Lambda'_\beta\). In particular, the Drinfeld double of a Hopf algebra is unimodular.
\end{theorem}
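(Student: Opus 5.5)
Since \(D\coloneqq D_\sigma(H_\alpha,H_\beta)\) is generated as an algebra by \(H_\alpha\otimes 1_\beta\) and \(1_\alpha\otimes H_\beta\), I would show that \(\Lambda\coloneqq\Lambda_\alpha\otimes\Lambda'_\beta\) is a left integral and a right integral by testing it against these generators on each side. This gives four identities. We have \(\Lambda=(\Lambda_\alpha\otimes1_\beta)(1_\alpha\otimes\Lambda'_\beta)\), and the multiplication rule gives \((a\otimes b)(1_\alpha\otimes b')=a\otimes bb'\) and \((a\otimes1_\beta)(a'\otimes b')=aa'\otimes b'\). So two identities are immediate:
\[
(a\otimes1_\beta)\Lambda=a\Lambda_\alpha\otimes\Lambda'_\beta=\varepsilon_\alpha(a)\Lambda,\qquad
\Lambda(1_\alpha\otimes b)=\Lambda_\alpha\otimes\Lambda'_\beta b=\varepsilon_\beta(b)\Lambda.
\]
Since \(\Lambda\neq0\), it remains to prove the two mixed identities \((1_\alpha\otimes b)\Lambda=\varepsilon_\beta(b)\Lambda\) and \(\Lambda(a\otimes1_\beta)=\varepsilon_\alpha(a)\Lambda\). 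The "in particular" clause then follows by applying this to \((H^{*\,\mathrm{cop}},H,\langle\cdot,\cdot\rangle)\) and checking \eqref{eq:skewcondition} for the evaluation pairing. Under the conventions of this section, the distinguished grouplikes of \(H^{*\,\mathrm{cop}}\) are \(\chi_H\) and \(g_H^{\pm1}\), so this check should be routine.

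For the right mixed identity I would expand
\[
\Lambda(a\otimes1_\beta)=\sigma(a_{(1)},\Lambda'_{\beta(1)})\,\sigma(S_\alpha(a_{(3)}),\Lambda'_{\beta(3)})\,\Lambda_\alpha a_{(2)}\otimes\Lambda'_{\beta(2)}.
\]
Since \(\Lambda_\alpha\) is a left integral, \(\Lambda_\alpha a_{(2)}=\chi_\alpha(a_{(2)})\Lambda_\alpha\), and the \(H_\alpha\)-factor collapses to \(\Lambda_\alpha\). Through \(\sigma_r\colon H_\beta\to H_\alpha^{*\,\mathrm{op}}\), what remains is a statement purely about the coproduct of the right integral \(\Lambda'_\beta\). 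We need the twisted cocommutativity of integrals (Radford's formula relating \(\Delta^{\mathrm{op}}(\Lambda')\) to \(\Delta(\Lambda')\) twisted by \(S^2\) and the distinguished grouplike \(g_\beta\)). Using it, together with the standard identity \(\Lambda'_{(1)}\otimes\Lambda'_{(2)}y=\Lambda'_{(1)}S(y)\otimes\Lambda'_{(2)}\) for a right integral, I would move the outer factors \(\Lambda'_{\beta(1)}\) and \(\Lambda'_{\beta(3)}\) together. Then (SP1), (SP2) and (SP5) combine the two \(\sigma\)-factors into \(\sigma(a_{(1)},\,\cdot\,)\) evaluated on something involving \(g_\beta\). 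By \eqref{eq:skewcondition}, \(\sigma(a,g_\beta)=\chi_\alpha^{-1}(a)\), which cancels the \(\chi_\alpha(a_{(2)})\) produced above and leaves \(\varepsilon_\alpha(a)\Lambda\). The left mixed identity is the mirror argument. The left integral \(\Lambda_\alpha\) plays the role of \(\Lambda'_\beta\), the right integral property of \(\Lambda'_\beta\) gives \(b\Lambda'_\beta\) in terms of \(\chi_\beta\), and the twist by \(g_\alpha\) produces \(\sigma(g_\alpha,b)=\chi_\beta(b)\), which cancels the \(\chi_\beta\) factor. \cref{lem:integral_property}(\ref{lem:integral_property2}) may be useful here for converting \(S(\Lambda_\alpha)\) into \(\chi_\alpha\rightharpoonup\Lambda_\alpha\).

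The main obstacle is this Sweedler-index bookkeeping. We must get the twisted-cocommutativity formula with exactly the right placement of \(S^{\pm2}\) and \(g^{\pm1}\), and ensure that the \(S^2\)-twist is absorbed by (SP5), since \(\sigma(S^2a,S^2b)=\sigma(a,b)\). I would first derive the needed identity abstractly. For a right integral \(\Lambda'\) of a Hopf algebra \(K\) and Hopf maps into \(K^*\), the element \(\Lambda'_{(1)}\otimes\Lambda'_{(3)}\otimes\Lambda'_{(2)}\) paired on the outside should reduce to a single pairing involving \(g_K\). I would verify this in the ordinary Drinfeld double case before inserting \(\sigma\). If the direct route gets tangled, a fallback is to prove only the right integral property and then obtain the left one by a symmetric argument applied to \(S_D(\Lambda)\). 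This uses the antipode formula of \(D\) together with \cref{lem:integral_property}(\ref{lem:integral_property2}), since \(S_D\) maps right integrals to left integrals.
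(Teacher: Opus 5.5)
Your plan is correct and is essentially the paper's proof. The paper expands \((a\otimes b)(\Lambda_\alpha\otimes\Lambda'_\beta)\) and \((\Lambda_\alpha\otimes\Lambda'_\beta)(a\otimes b)\) directly, which is equivalent to your generator-by-generator check. It collapses one factor to \(\chi_\beta^{-1}(b_{(2)})\), respectively \(\chi_\alpha(a_{(2)})\), and first rewrites this as \(\sigma(g_\alpha^{-1},b_{(2)})\), respectively \(\sigma(a_{(2)},g_\beta^{-1})\), so that (SP1), respectively (SP2) with (SP5), merges all three pairings at once. It then applies Radford's identities \(\Lambda_{(1)}g^{-1}S(\Lambda_{(3)})\otimes\Lambda_{(2)}=1\otimes\Lambda\) and \(S^{-1}(\Lambda'_{(3)})g^{-1}\Lambda'_{(1)}\otimes\Lambda'_{(2)}=1\otimes\Lambda'\), which indeed follow from twisted cocommutativity alone. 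One correction: your translation identity for a right integral should read \(\Lambda'_{(1)}\otimes\Lambda'_{(2)}y=\Lambda'_{(1)}S^{-1}(y)\otimes\Lambda'_{(2)}\), not with \(S\), and it is not needed.
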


\begin{proof}
    Temporarily forgetting the indices, let \(\Lambda\) and \(\Lambda'\) be left and right integrals of \(H\), respectively. Then we have two formulas
    \begin{align}
         \Lambda_{(1)}g^{-1}S(\Lambda_{(3)})\otimes\Lambda_{(2)} &= 1\otimes\Lambda,\label{eq:intdistinguishedprop1}\\
        S^{-1}(\Lambda'_{(3)})g^{-1}\Lambda'_{(1)}\otimes\Lambda'_{(2)} &= 1\otimes\Lambda'.\label{eq:intdistinguishedprop2}
    \end{align}
    Indeed, \cref{eq:intdistinguishedprop2} is due to \cite[Eq.~(15)]{radford1993minimal}. For \cref{eq:intdistinguishedprop1}, we substitute \(S(\Lambda)\), which is a right integral, into \cref{eq:intdistinguishedprop2} and apply \(\id_H\otimes S^{-1}\) to both sides.

    Then for \(a\otimes b\in D_\sigma(H_\alpha,H_\beta)\), the calculation
    \begin{align*}
        \MoveEqLeft(a\otimes b)(\Lambda_\alpha\otimes\Lambda'_\beta)\\
        &= \sigma(\Lambda_{\alpha(1)},b_{(1)})\sigma(S_\alpha(\Lambda_{\alpha(3)}),b_{(3)})a\Lambda_{\alpha(2)}\otimes b_{(2)}\Lambda'_\beta\\
        &= \sigma(\Lambda_{\alpha(1)},b_{(1)})\chi_\beta^{-1}(b_{(2)})\sigma(S_\alpha(\Lambda_{\alpha(3)}),b_{(3)})a\Lambda_{\alpha(2)}\otimes\Lambda'_\beta\\
        &= \sigma(\Lambda_{\alpha(1)},b_{(1)})\sigma(g_\alpha^{-1},b_{(2)})\sigma(S_\alpha(\Lambda_{\alpha(3)}),b_{(3)})a\Lambda_{\alpha(2)}\otimes\Lambda'_\beta &&\text{(\cref{eq:skewcondition})}\\
        &= \sigma(\Lambda_{\alpha(1)}g_\alpha^{-1}S_\alpha(\Lambda_{\alpha(3)}),b)a\Lambda_{\alpha(2)}\otimes\Lambda'_\beta\\
        &= \sigma(1,b)a\Lambda_\alpha\otimes\Lambda'_\beta &&\text{(\cref{eq:intdistinguishedprop1})}\\
        &= \varepsilon_\alpha(a)\varepsilon_\beta(b)\Lambda_\alpha\otimes\Lambda'_\beta
    \end{align*}
    shows that \(\Lambda_\alpha\otimes\Lambda'_\beta\) is a left integral, and the calculation
    \begin{align*}
        \MoveEqLeft(\Lambda_\alpha\otimes\Lambda'_\beta)(a\otimes b)\\
        &= \sigma(a_{(1)},\Lambda'_{\beta(1)})\sigma(a_{(3)},S_\beta^{-1}(\Lambda'_{\beta(3)}))\Lambda_\alpha a_{(2)}\otimes\Lambda'_{\beta(2)}b\\
        &= \sigma(a_{(1)},\Lambda'_{\beta(1)})\chi_\alpha(a_{(2)})\sigma(a_{(3)},S_\beta^{-1}(\Lambda'_{\beta(3)}))\Lambda_\alpha\otimes\Lambda'_{\beta(2)}b\\
        &= \sigma(a_{(1)},\Lambda'_{\beta(1)})\sigma(a_{(2)},g_\beta^{-1})\sigma(a_{(3)},S_\beta^{-1}(\Lambda'_{\beta(3)}))\Lambda_\alpha\otimes\Lambda'_{\beta(2)}b &&\text{(\cref{eq:skewcondition})}\\
        &= \sigma(a,S_\beta^{-1}(\Lambda'_{\beta(3)})g_\beta^{-1}\Lambda'_{\beta(1)})\Lambda_\alpha\otimes\Lambda'_{\beta(2)}b\\
        &= \sigma(a,1)\Lambda_\alpha\otimes\Lambda'_\beta b &&\text{(\cref{eq:intdistinguishedprop2})}\\
        &= \varepsilon_\alpha(a)\varepsilon_\beta(b)\Lambda_\alpha\otimes\Lambda'_\beta
    \end{align*}
    shows that \(\Lambda_\alpha\otimes\Lambda'_\beta\) is a right integral.
\end{proof}

\begin{corollary}
    Suppose that the hypotheses of \cref{thm:drinfeldunimodular} hold. Then \(D_\sigma(H_\alpha,H_\beta)\) is semisimple if and only if both \(H_\alpha\) and \(H_\beta\) are semisimple.
\end{corollary}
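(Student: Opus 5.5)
By \cref{lem:integralss}, semisimplicity of a Hopf algebra is detected by the counit of a nonzero (left or right) integral, so I will compute the counit of the two-sided integral from \cref{thm:drinfeldunimodular}. Under the hypotheses of that theorem, \(\Lambda_\alpha\otimes\Lambda'_\beta\) is a nonzero two-sided integral of \(D_\sigma(H_\alpha,H_\beta)\). Here I use that \(\Lambda_\alpha\ne0\) and \(\Lambda'_\beta\ne0\), so their tensor product is nonzero. Since \(D_\sigma(H_\alpha,H_\beta)=H_\alpha\otimes H_\beta\) as a coalgebra, its counit is \(\varepsilon_\alpha\otimes\varepsilon_\beta\). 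Therefore
\[
\varepsilon_D(\Lambda_\alpha\otimes\Lambda'_\beta)=\varepsilon_\alpha(\Lambda_\alpha)\,\varepsilon_\beta(\Lambda'_\beta).
\]

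Applying \cref{lem:integralss} to \(D_\sigma(H_\alpha,H_\beta)\), the double is semisimple if and only if this product is nonzero. A product in the field \(\Bbbk\) is nonzero exactly when both factors are nonzero. Applying \cref{lem:integralss} again, now to \(H_\alpha\) with the left integral \(\Lambda_\alpha\) and to \(H_\beta\) with the right integral \(\Lambda'_\beta\), the factor \(\varepsilon_\alpha(\Lambda_\alpha)\) is nonzero if and only if \(H_\alpha\) is semisimple, and \(\varepsilon_\beta(\Lambda'_\beta)\) is nonzero if and only if \(H_\beta\) is semisimple. Combining these gives the equivalence.

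The only substantive input is the identification of a nonzero integral of the double, which \cref{thm:drinfeldunimodular} supplies; everything else is routine.
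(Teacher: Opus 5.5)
Your proposal is correct and follows the paper's proof exactly. You take the nonzero two-sided integral \(\Lambda_\alpha\otimes\Lambda'_\beta\) from \cref{thm:drinfeldunimodular}, note that its counit is \(\varepsilon_\alpha(\Lambda_\alpha)\varepsilon_\beta(\Lambda'_\beta)\), and apply \cref{lem:integralss} to all three Hopf algebras.
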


\begin{proof}
    Choose nonzero left and right integrals \(\Lambda_\alpha\in H_\alpha\) and \(\Lambda'_\beta\in H_\beta\), respectively. Then by \cref{thm:drinfeldunimodular}, \(\Lambda_\alpha\otimes\Lambda'_\beta\) is a nonzero integral of \(D_\sigma(H_\alpha,H_\beta)\). Since \(\varepsilon(\Lambda_\alpha\otimes\Lambda'_\beta)=\varepsilon_\alpha(\Lambda_\alpha)\varepsilon_\beta(\Lambda'_\beta)\), the corollary now follows from \cref{lem:integralss}.
\end{proof}

\begin{remark}
    It is also known that the canonical element of the Drinfeld double of a Hopf algebra is a universal \(R\)-matrix, and thus the Drinfeld double is quasitriangular. For a study of the quasitriangular structure of generalized Drinfeld doubles, see, for example, \cite{jiao2000skew}.
\end{remark}

\section{Hopf triplets}\label{sec:hopftriplet}

The notion of a Hopf triplet was first introduced in \cite{chaidez20224} for the construction of invariants of 4-manifolds. It involves three Hopf algebras together with pairwise skew pairings that satisfy certain conditions.

\subsection{Definition}

\begin{definition}\label{def:hopftriplet}
    A \emph{Hopf triplet} \(\mathcal{H}=(H_\alpha,H_\beta,H_\gamma,\sigma)\) consists of three Hopf algebras \(H_\alpha\), \(H_\beta\), and \(H_\gamma\), and three pairings \(\sigma_{\mu\nu}\colon H_\mu\otimes H_\nu\to\Bbbk\) (\(\mu\nu\in\{\alpha\beta,\beta\gamma,\gamma\alpha\}\)) that satisfy the following conditions:
    \begin{enumerate}[label=(HT\arabic*),leftmargin=*,ref=HT\arabic*]
        \item For each \(\mu\nu\in\{\alpha\beta,\beta\gamma,\gamma\alpha\}\), \(\sigma_{\mu\nu}\colon H_\mu\otimes H_\nu\to\Bbbk\) is a skew pairing.\label{def:hopftriplet1}
        \item Each \(\sigma_{\mu\nu}\) preserves the distinguished grouplike elements, as in \cref{eq:skewcondition}.
        \item The following \emph{triangle identity} holds:
        \begin{equation*}
            \sigma_{\alpha\beta}(a_{(1)},b_{(2)})\sigma_{\beta\gamma}(b_{(1)},c_{(2)})\sigma_{\gamma\alpha}(S_\gamma^2(c_{(1)}),a_{(2)}) = \sigma_{\alpha\beta}(a_{(2)},b_{(1)})\sigma_{\beta\gamma}(b_{(2)},c_{(1)})\sigma_{\gamma\alpha}(S_\gamma^2(c_{(2)}),a_{(1)})
        \end{equation*}
        for all \(a\in H_\alpha\), \(b\in H_\beta\), and \(c\in H_\gamma\).\label{def:hopftriplet3}
    \end{enumerate}
\end{definition}

Following \cite{chaidez2023combed}, the triangle identity is interpreted as follows: Let \(H_\alpha\), \(H_\beta\), and \(H_\gamma\) be Hopf algebras, and let \(\sigma_{\mu\nu}\colon H_\mu\otimes H_\nu\to\Bbbk\) (\(\mu\nu\in\{\alpha\beta,\beta\gamma,\gamma\alpha\}\)) be skew pairings. Then \(\sigma_{\gamma\alpha}\) and \(\sigma_{\beta\gamma}\) induce Hopf algebra maps 
\[
    \sigma_{\gamma\alpha,r}\colon H_\alpha^\mathrm{op}\to H_\gamma^*\quad\text{and}\quad \sigma_{\beta\gamma,l}\colon H_\beta^\mathrm{cop}\to H_\gamma^*,
\]
respectively. If the maps \((S_\gamma^2)^*\circ\sigma_{\gamma\alpha,r}\) and \(\sigma_{\beta\gamma,l}\) satisfy \cref{eq:drinfeldmap} in \cref{lem:drinfeldmap}, the map 
\[
    \Phi_{\alpha\beta}\colon D_{\sigma_{\alpha\beta}}(H_\alpha^\mathrm{op},H_\beta^\mathrm{cop})\longrightarrow H_\gamma^*;\quad a\otimes b\longmapsto ((S_\gamma^2)^*\circ\sigma_{\gamma\alpha,r})(a)\sigma_{\beta\gamma,l}(b)=\sigma_{\gamma\alpha}(S_\gamma^2(\cdot),a)\sigma_{\beta\gamma}(b,\cdot)
\]
is a Hopf algebra map. Evaluating \cref{eq:drinfeldmap} at \(c\in H_\gamma\) gives precisely the triangle identity. We have the following:

\begin{proposition}[{\cite[Lemma~3.4 and Theorem~3.6]{chaidez2023combed}}]
    Let \(H_\alpha\) ,\(H_\beta\), and \(H_\gamma\) be Hopf algebras, and let \(\sigma_{\mu\nu}\colon H_\mu\otimes H_\nu\to\Bbbk\) (\(\mu\nu\in\{\alpha\beta,\beta\gamma,\gamma\alpha\}\)) be skew pairings. Then the following are equivalent:
    \begin{enumerate}[label=\textup{(\arabic*)},ref=\arabic*]
        \item They satisfy the triangle identity. \label{prop:triangleeq1}
        \item \(\Phi_{\alpha\beta}\colon D_{\sigma_{\alpha\beta}}(H_\alpha^\mathrm{op},H_\beta^\mathrm{cop})\to H_\gamma^*\) is a Hopf algebra map. \label{prop:triangleeq2}
        \item They satisfy the following different version of the triangle identity:
        \begin{align*}
            \MoveEqLeft\sigma_{\alpha\beta}(S_\alpha(a_{(1)}),b_{(2)})\sigma_{\beta\gamma}(S_\beta(b_{(1)}),S_\gamma^2(c_{(2)}))\sigma_{\gamma\alpha}(S_\gamma(c_{(1)}),a_{(2)})\\
            &= \sigma_{\alpha\beta}(S_\alpha(a_{(2)}),b_{(1)})\sigma_{\beta\gamma}(S_\beta(b_{(2)}),S_\gamma^2(c_{(1)}))\sigma_{\gamma\alpha}(S_\gamma(c_{(2)}),a_{(1)})
        \end{align*}
        for all \(a\in H_\alpha\), \(b\in H_\beta\), and \(c\in H_\gamma\).\label{prop:triangleeq3}
    \end{enumerate}
    Using the compatibility of the pairings and the antipodes, we see that the triangle identity is symmetric with respect to the cyclic permutation of the indices \(\alpha\mapsto\beta\mapsto\gamma\mapsto\alpha\). Hence, the following conditions are also equivalent to (\ref{prop:triangleeq1})--(\ref{prop:triangleeq3}):
    \begin{enumerate}[label=\textup{(\arabic*)},ref=\arabic*]
        \setcounter{enumi}{3}
        \item \(\Phi_{\beta\gamma}\colon D_{\sigma_{\beta\gamma}}(H_\beta^\mathrm{op},H_\gamma^\mathrm{cop})\to H_\alpha^*\) is a Hopf algebra map.
        \item \(\Phi_{\gamma\alpha}\colon D_{\sigma_{\gamma\alpha}}(H_\gamma^\mathrm{op},H_\alpha^\mathrm{cop})\to H_\beta^*\) is a Hopf algebra map.
    \end{enumerate}
\end{proposition}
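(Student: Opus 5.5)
The equivalence (1)\(\Leftrightarrow\)(2) follows from \cref{lem:drinfeldmap}. Apply it with \(H=H_\gamma^*\), with the skew pair \((H_\alpha^\mathrm{op},H_\beta^\mathrm{cop},\sigma_{\alpha\beta})\), and with
\[
f=(S_\gamma^2)^*\circ\sigma_{\gamma\alpha,r}\colon H_\alpha^\mathrm{op}\to H_\gamma^*,\qquad g=\sigma_{\beta\gamma,l}\colon H_\beta^\mathrm{cop}\to H_\gamma^*.
\]
Before invoking the lemma, I would check three things.

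\begin{enumerate}
\item \(\sigma_{\alpha\beta}\) is a skew pairing for \(H_\alpha^\mathrm{op}\otimes H_\beta^\mathrm{cop}\). Reversing the product in the first slot and the coproduct in the second simply swaps the roles in (SP1) and (SP2), and \(S\) is unchanged up to inverse.
\item \(f\) and \(g\) are Hopf algebra maps. This comes from the skew-pairing axioms, and \(S_\gamma^2\) is a Hopf automorphism.
\item The compatibility condition \cref{eq:drinfeldmap}, read in \(H_\gamma^*\), is the triangle identity. I would evaluate both sides at \(c\in H_\gamma\) and expand the product of \(H_\gamma^*\) through \(\Delta_\gamma\). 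The coproducts of \(H_\alpha^\mathrm{op}\) and \(H_\beta^\mathrm{cop}\) are \(\Delta_\alpha\) and \(\Delta_\beta^\mathrm{op}\). Tracking the Sweedler indices then reproduces the two sides of the triangle identity exactly.
\end{enumerate}

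With these in place, \cref{lem:drinfeldmap} gives (1)\(\Leftrightarrow\)(2).

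For (1)\(\Leftrightarrow\)(3), substitute \(a\mapsto S_\alpha(a)\), \(b\mapsto S_\beta(b)\), \(c\mapsto S_\gamma(c)\) into the triangle identity. Since the antipodes are bijective, this substitution is invertible. Antipodes are anti-coalgebra maps, so \(\Delta(S(x))=S(x_{(2)})\otimes S(x_{(1)})\). The Sweedler labels therefore swap on both sides, which converts (1) into (3) after renaming. The term \(\sigma_{\gamma\alpha}(S_\gamma^3(c),S_\alpha(a))\) must be rewritten as \(\sigma_{\gamma\alpha}(S_\gamma(c),a)\). I would do this with (SP5) in the form \(\sigma(S(x),S(y))=\sigma(x,y)\), which follows from (SP5) together with \(S^{-1}\). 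The \(\sigma_{\beta\gamma}\) factor keeps \(S_\gamma^2\) on \(c\), as in the stated form (3).

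For cyclic symmetry, I would show that (3) for \((\alpha,\beta,\gamma)\) is equivalent to (1) for \((\beta,\gamma,\alpha)\). Apply \(S_\beta^{\pm1}\) and \(S_\gamma^{\pm1}\) to the arguments and use \(\sigma(S(x),y)=\sigma(x,S^{-1}(y))\) to move the powers of antipodes off \(\gamma\) and onto \(\alpha\). This should produce \(\sigma_{\beta\gamma}(b_{(1)},c_{(2)})\sigma_{\gamma\alpha}(c_{(1)},a_{(2)})\sigma_{\alpha\beta}(S_\alpha^2(a_{(1)}),b_{(2)})\)-type expressions, which is the triangle identity for the rotated triple. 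Applying (1)\(\Leftrightarrow\)(2) to the rotated triples then yields (4) and (5).

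The main obstacle is the Sweedler-index and antipode-power bookkeeping in the cyclic step. There one must keep careful track of which powers of \(S\) sit on each argument, making consistent use of (SP5) and of the anti-coalgebra property of \(S\).
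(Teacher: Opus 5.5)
\paragraph{Verdict.} Your argument for (1)\(\Leftrightarrow\)(2) is correct and matches the reasoning the paper gives before the proposition. The rest is cited to \cite{chaidez2023combed}. The step (1)\(\Leftrightarrow\)(3) has a genuine gap, and so does your cyclic step, which is routed through (3).

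\paragraph{Why the substitution fails.} Substitute \(a\mapsto S_\alpha(a)\), \(b\mapsto S_\beta(b)\), \(c\mapsto S_\gamma(c)\) into the left side of (1). You get \(\sigma_{\alpha\beta}(S_\alpha(a_{(2)}),S_\beta(b_{(1)}))\,\sigma_{\beta\gamma}(S_\beta(b_{(2)}),S_\gamma(c_{(1)}))\,\sigma_{\gamma\alpha}(S_\gamma^3(c_{(2)}),S_\alpha(a_{(1)}))\). The rule \(\sigma(S(x),S(y))=\sigma(x,y)\) turns this into the right side of (1), not into a side of (3):
\begin{itemize}
\item \(\sigma_{\gamma\alpha}(S_\gamma^3(c),S_\alpha(a))=\sigma_{\gamma\alpha}(S_\gamma^2(c),a)\), not \(\sigma_{\gamma\alpha}(S_\gamma(c),a)\);
\item the antipodes in the \(\sigma_{\alpha\beta}\) and \(\sigma_{\beta\gamma}\) factors cancel, instead of producing \(\sigma_{\alpha\beta}(S_\alpha(a),b)\) and \(\sigma_{\beta\gamma}(S_\beta(b),S_\gamma^2(c))\).
\end{itemize}
Your substitution is merely a symmetry of (1).

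No more careful bookkeeping can rescue this, because there is an invariant. For a factor \(\sigma_{\mu\nu}(S^i(\cdot),S^j(\cdot))\), (SP5) preserves \(i-j\). Each variable occurs once as a left argument and once as a right argument, so a substitution \(x\mapsto S^k(x)\) shifts these differences by \(+k\) and \(-k\). Hence the sum of \(i-j\) over the three factors is unchanged by every such move. This sum is \(2\) for (1) and for the rotated triangle identity, but \(1+(1-2)+1=1\) for (3). The same obstruction defeats your cyclic step, which tries to pass from (3) to the rotated identity by antipode moves.

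\paragraph{The missing idea: convolution invertibility of \(\sigma_{\alpha\beta}\).} Work in the convolution algebra \(\operatorname{Hom}(H_\alpha\otimes H_\beta^{\mathrm{cop}},H_\gamma^*)\), with \(f=(S_\gamma^2)^*\circ\sigma_{\gamma\alpha,r}\) and \(g=\sigma_{\beta\gamma,l}\) as in your proposal. Put \(\tau(a\otimes b)=\sigma_{\alpha\beta}(a,b)1\), \(P(a\otimes b)=f(a)g(b)\), and \(Q(a\otimes b)=g(b)f(a)\).
\begin{itemize}
\item The form of (1) you obtained for (2), namely \cref{eq:drinfeldmap} for the pair \((H_\alpha^{\mathrm{op}},H_\beta^{\mathrm{cop}})\), reads exactly \(\tau*P=Q*\tau\).
\item By (SP2) and (SP3), \(\tau\) has the two-sided convolution inverse \(\tau^{-1}(a\otimes b)=\sigma_{\alpha\beta}(a,S_\beta(b))1\).
\item Hence (1) is equivalent to \(P*\tau^{-1}=\tau^{-1}*Q\).
\item In this identity substitute \(a\mapsto S_\alpha(a)\) and \(b\mapsto S_\beta^{-1}(b)\), evaluate at \(c\), and apply (SP5) to each factor. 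This gives exactly (3).
\end{itemize}

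\paragraph{The cyclic symmetry.} You do not need (3) here. Substitute \(a\mapsto S_\alpha^2(a)\) in (1). Since \(S_\alpha^2\) is a coalgebra map, the Sweedler legs are unchanged, and \(\sigma_{\gamma\alpha}(S_\gamma^2(x),S_\alpha^2(y))=\sigma_{\gamma\alpha}(x,y)\) yields the triangle identity for \((\beta,\gamma,\alpha)\) verbatim. Repeating this with \(b\mapsto S_\beta^2(b)\) gives the identity for \((\gamma,\alpha,\beta)\). Your application of \cref{lem:drinfeldmap} to the rotated triples then gives (4) and (5).
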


\begin{definition}
    A Hopf triplet \(\mathcal{H}=(H_\alpha,H_\beta,H_\gamma,\sigma)\) is \emph{semisimple} if all three Hopf algebras \(H_\alpha\), \(H_\beta\), and \(H_\gamma\) are semisimple. Otherwise, it is \emph{non-semisimple}.
\end{definition}

\subsection{Non-degeneracy of Hopf triplets}

The non-degeneracy condition was introduced in \cite{chaidez2023combed}.
Let \(\mathcal{H}=(H_\alpha,H_\beta,H_\gamma,\sigma)\) be a Hopf triplet. For \(\mu\nu\rho\in\{\alpha\beta\gamma,\beta\gamma\alpha,\gamma\alpha\beta\}\), define the linear map
\[
    \Psi_{\mu\nu}\colon D_{\sigma_{\mu\nu}}(H_\mu^\mathrm{op},H_\nu^\mathrm{cop})\longrightarrow H_\rho^*;\quad x\otimes y\longmapsto\sigma_{\rho\mu,r}(x)\sigma_{\nu\rho,l}(y)=\sigma_{\rho\mu}(\cdot,x)\sigma_{\nu\rho}(y,\cdot).
\]

\begin{definition}\label{def:nondegenerate}
    Let \(\mathcal{H}=(H_\alpha,H_\beta,H_\gamma,\sigma)\) be a Hopf triplet, and let \(\Lambda_\alpha\in H_\alpha\), \(\Lambda_\beta\in H_\beta\), and \(\Lambda_\gamma\in H_\gamma\) be nonzero left integrals. Then \(\mathcal{H}\) is \emph{non-degenerate} if 
    \begin{gather*}
        \Psi_{\alpha\beta}(S_\alpha(\Lambda_\alpha)\otimes S_\beta(\Lambda_\beta))(\Lambda_\gamma)\ne 0,\quad \Psi_{\beta\gamma}(S_\beta(\Lambda_\beta)\otimes S_\gamma(\Lambda_\gamma))(\Lambda_\alpha)\ne 0,\\
        \text{and}\quad \Psi_{\gamma\alpha}(S_\gamma(\Lambda_\gamma)\otimes S_\alpha(\Lambda_\alpha))(\Lambda_\beta)\ne 0.
    \end{gather*}
    Note that this condition is independent of the choice of nonzero integrals. If one replaces each \(\Lambda_\mu\) with \(c_\mu\Lambda_\mu\) (\(c_\mu\in\Bbbk^\times\)), each evaluation is multiplied by \(c_\alpha c_\beta c_\gamma\).
\end{definition}

\begin{remark}\label{rem:nondegeneratecalc}
    The original form of the definition of non-degeneracy given in \cite{chaidez2023combed} is
    \begin{equation*}
        \sigma_{\rho\mu}(S_\rho^{-1}(\Lambda_{\rho(1)}),\chi_\mu^{-1}\rightharpoonup\Lambda_\mu)\sigma_{\nu\rho}(\Lambda_\nu,\Lambda_{\rho(2)})\ne0,\quad \mu\nu\rho\in\{\alpha\beta\gamma,\beta\gamma\alpha,\gamma\alpha\beta\}
    \end{equation*}
    and looks different from ours. We can, however, calculate
    \begin{align*}
        \MoveEqLeft\sigma_{\rho\mu}(S_\rho^{-1}(\Lambda_{\rho(1)}),\chi_\mu^{-1}\rightharpoonup\Lambda_\mu)\sigma_{\nu\rho}(\Lambda_\nu,\Lambda_{\rho(2)})\\
        &= \sigma_{\rho\mu}(S_\rho^{-1}(\Lambda_{\rho(1)}),\chi_\mu^{-1}\rightharpoonup\Lambda_\mu)\sigma_{\nu\rho}(S_\nu^{-1}(\Lambda_\nu),S_\rho^{-1}(\Lambda_{\rho(2)}))\\
        &= \sigma_{\rho\mu}(S_\rho^{-1}(\Lambda_\rho)_{(2)},\chi_\mu^{-1}\rightharpoonup\Lambda_\mu)\sigma_{\nu\rho}(\Lambda_\nu\leftharpoonup\chi_\nu,S_\rho^{-1}(\Lambda_\rho)_{(1)})\\
        &= \sigma_{\rho\mu}(S_\rho^{-1}(\Lambda_\rho)_{(2)},\Lambda_{\mu(1)}\chi_\mu^{-1}(\Lambda_{\mu(2)}))\sigma_{\nu\rho}(\chi_\nu(\Lambda_{\nu(1)})\Lambda_{\nu(2)},S_\rho^{-1}(\Lambda_\rho)_{(1)})\\
        &= \sigma_{\rho\mu}(S_\rho^{-1}(\Lambda_\rho)_{(2)},\Lambda_{\mu(1)})\sigma_{\rho\mu}(g_\rho^{-1},\Lambda_{\mu(2)})\sigma_{\nu\rho}(\Lambda_{\nu(2)},S_\rho^{-1}(\Lambda_\rho)_{(1)})\sigma_{\nu\rho}(\Lambda_{\nu(1)},g_\rho^{-1})\\
        &= \sigma_{\rho\mu}(S_\rho^{-1}(\Lambda_\rho)_{(2)}g_\rho^{-1},\Lambda_\mu)\sigma_{\nu\rho}(\Lambda_\nu,S_\rho^{-1}(\Lambda_\rho)_{(1)}g_\rho^{-1})\\
        &= \sigma_{\rho\mu}((S_\rho^{-1}(\Lambda_\rho)g_\rho^{-1})_{(2)},\Lambda_\mu)\sigma_{\nu\rho}(\Lambda_\nu,(S_\rho^{-1}(\Lambda_\rho)g_\rho^{-1})_{(1)})\\
        &= \varepsilon_\rho(g_\rho^{-1})\sigma_{\rho\mu}(S_\rho^{-1}(\Lambda_\rho)_{(2)},\Lambda_\mu)\sigma_{\nu\rho}(\Lambda_\nu,S_\rho^{-1}(\Lambda_\rho)_{(1)})\\
        &= \sigma_{\rho\mu}(\Lambda_{\rho{(1)}},S_\mu(\Lambda_\mu))\sigma_{\nu\rho}(S_{\nu}(\Lambda_\nu),\Lambda_{\rho(2)}),
    \end{align*}
    arriving at the same definition. Here, we use \cref{lem:integral_property}~(\ref{lem:integral_property2}) in the second equality, \cref{eq:skewcondition} in the fourth equality, and the fact that \(S_\rho^{-1}(\Lambda_\rho)\) is a right integral in the seventh equality.
    This identity is the key, allowing us to formulate the non-degeneracy condition in terms of integrals, as explained in the following remark.
\end{remark}

\begin{remark}\label{rem:nondegenerateintegral}
    For each \(\mu\nu\rho\in\{\alpha\beta\gamma,\beta\gamma\alpha,\gamma\alpha\beta\}\), \(S_\mu(\Lambda_\mu)\) is a left integral of \(H_\mu^\mathrm{op}\), and \(S_\nu(\Lambda_\nu)\) is a right integral of \(H_\nu^\mathrm{cop}\). Thus, by \cref{thm:drinfeldunimodular}, \(S_\mu(\Lambda_\mu)\otimes S_\nu(\Lambda_\nu)\) is a two-sided integral of the generalized Drinfeld double \(D_{\sigma_{\mu\nu}}(H_\mu^\mathrm{op},H_\nu^\mathrm{cop})\). The non-degeneracy condition asserts that the image of the integral \(S_\mu(\Lambda_\mu)\otimes S_\nu(\Lambda_\nu)\in D_{\sigma_{\mu\nu}}(H_\mu^\mathrm{op},H_\nu^\mathrm{cop})\) under \(\Psi_{\mu\nu}\) takes a nonzero value on the integral \(\Lambda_\rho\in H_\rho\).
\end{remark}

The following is our main theorem:

\begin{theorem}\label{thm:nondegvsss}
    A Hopf triplet is non-degenerate if and only if it is semisimple.
\end{theorem}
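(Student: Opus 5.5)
The plan is to work with the reformulation in \cref{rem:nondegeneratecalc}: for $\mu\nu\rho\in\{\alpha\beta\gamma,\beta\gamma\alpha,\gamma\alpha\beta\}$ the relevant quantity is
\[
    \sigma_{\rho\mu}(\Lambda_{\rho(1)},S_\mu(\Lambda_\mu))\,\sigma_{\nu\rho}(S_\nu(\Lambda_\nu),\Lambda_{\rho(2)}) = (uv)(\Lambda_\rho),
\]
where $u\coloneqq\sigma_{\rho\mu,r}(S_\mu(\Lambda_\mu))$, $v\coloneqq\sigma_{\nu\rho,l}(S_\nu(\Lambda_\nu))$, and $uv$ is the product in $H_\rho^*$. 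Here $\Lambda_\rho$ is regarded as a functional on $H_\rho^*$, and as such it is a nonzero (left or right) integral of $H_\rho^{**}$. Let $U\subseteq H_\rho^*$ and $V\subseteq H_\rho^*$ be the images of the Hopf algebra maps $\sigma_{\rho\mu,r}$ and $\sigma_{\nu\rho,l}$, with the appropriate op/cop structures; these are Hopf subalgebras. The image of an integral under a surjective Hopf map is an integral of the target, possibly zero. Hence $u$ is an integral of $U$ and $v$ is an integral of $V$, and we set $K\coloneqq U\cap V$.

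\textbf{Non-degenerate $\Rightarrow$ semisimple.} Fix $\rho$ and assume the corresponding quantity is nonzero. Then $uv\neq0$, so $u,v\ne0$, and \cref{lem:intsubhopf} gives that $K$ is semisimple. Next I want $\Lambda_\rho|_U\neq0$ and $\Lambda_\rho|_V\ne0$, so that \cref{lem:cointonsubhopf}, applied in $H=H_\rho^*$ with $\lambda=\Lambda_\rho$, yields $\Lambda_\rho|_K\neq0$.

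The restriction of an integral of $H_\rho^{**}$ to the Hopf subalgebra $K$ is still an integral of $K^*$, by invariance. In characteristic zero, $K$ semisimple implies $K$ cosemisimple, so $K^*$ is semisimple. By \cref{lem:integralss} applied to $K^*$, the nonzero integral $\Lambda_\rho|_K$ satisfies $\Lambda_\rho(1)\neq0$, that is, $\varepsilon_\rho(\Lambda_\rho)\ne0$. By \cref{lem:integralss} again, $H_\rho$ is semisimple. Running this over the three cyclic choices of $\rho$ gives semisimplicity of the whole triplet.

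\textbf{Semisimple $\Rightarrow$ non-degenerate.} By Larson--Radford, $S^2=\id$ and every $H_\mu$ is semisimple and cosemisimple. Then $\Psi_{\mu\nu}=\Phi_{\mu\nu}$ is a Hopf algebra map. By \cref{rem:nondegenerateintegral} and the corollary to \cref{thm:drinfeldunimodular}, the double is semisimple with two-sided integral $T=S_\mu(\Lambda_\mu)\otimes S_\nu(\Lambda_\nu)$ and $\varepsilon(T)\ne0$. Its image $t=\Psi_{\mu\nu}(T)$ is an integral of the Hopf subalgebra $W=\operatorname{im}\Psi_{\mu\nu}\subseteq H_\rho^*$, and $\varepsilon(t)=\varepsilon(T)\neq0$. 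The functional $\Lambda_\rho$ restricts to an integral of $W^*$, which is nonzero because $\Lambda_\rho(1)=\varepsilon_\rho(\Lambda_\rho)\ne0$. Then \cref{lem:integral_property}~(\ref{lem:integral_property1}) applied to $W$ gives $\Lambda_\rho(t)\neq0$.

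\textbf{Main obstacle.} The step I expect to be hardest is deducing $\Lambda_\rho|_U\ne0$ and $\Lambda_\rho|_V\ne0$ from $\Lambda_\rho(uv)\ne0$. I would first try to copy the socle argument in the proof of \cref{lem:cointonsubhopf}. Take the primitive idempotent $e$ attached to $\Lambda_\rho$ inside $M=H_\rho^*$, and suppose $\Lambda_\rho\in U^\perp$. Then $U^\perp e=0$, and I would aim to show that this forces $uv$, which lies in $uV\subseteq UV$, to be killed by $\Lambda_\rho$. To do this I would use the fact that $u$ absorbs $U$ up to a character, and use Nichols--Zoeller freeness of $H_\rho^*$ over $U$ to decompose $uv$.

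If that fails, my fallback is to exploit the freedom to put $S$ on either side of the pairings via (\ref{def:skewpairing5}). This allows replacing $uv$ by an expression in which the integral $u$ of $U$ stands directly against $\Lambda_\rho$, for instance $\Lambda_\rho(uv)=u'(\Lambda_\rho')$-type factorizations with $v$ absorbed into a translate of $\Lambda_\rho$.
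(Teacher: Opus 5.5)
Your `if' direction is the paper's argument. In the `only if' direction your framework is also the paper's, run contrapositively: $U$, $V$, $K=U\cap V$, the contrapositive of \cref{lem:intsubhopf}, \cref{lem:cointonsubhopf}, and semisimple $\Leftrightarrow$ cosemisimple in characteristic zero. The gap is the step you flag as the main obstacle, and it cannot be closed the way you propose. You try to get $\Lambda_\rho|_U\ne0$ and $\Lambda_\rho|_V\ne0$ from $E_\rho=\Lambda_\rho(uv)\ne0$ alone. That amounts to claiming that the single condition $E_\rho\ne0$ forces $H_\rho$ to be semisimple, which is false even for genuine Hopf triplets.

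Here is a counterexample. Take a non-semisimple $H$ with $H$ and $H^*$ both unimodular, for instance the $8$-dimensional $E(2)$: $c$ grouplike, $\Delta(x_i)=x_i\otimes1+c\otimes x_i$, $c^2=1$, $x_i^2=0$, $x_1x_2=-x_2x_1$, $cx_i=-x_ic$. Its left integral $(1+c)x_1x_2$ is also a right integral. Moreover $S^4=\id$ and $c$ is not central, so by Radford's $S^4$ formula its distinguished grouplike element is $1$. Put $H_\alpha=H^{*\mathrm{cop}}$ and $H_\beta=H$ with $\sigma_{\alpha\beta}$ the canonical pairing. Put $H_\gamma=\Bbbk$ with $\sigma_{\beta\gamma}$ and $\sigma_{\gamma\alpha}$ the trivial pairings $\varepsilon\otimes\varepsilon$.
\begin{itemize}
\item The triangle identity collapses to $\sigma_{\alpha\beta}(a,b)=\sigma_{\alpha\beta}(a,b)$.
\item (HT2) holds because all distinguished grouplike elements are trivial.
\item $E_\alpha=\Lambda_\alpha(S_\beta(\Lambda_\beta))\ne0$ by \cref{lem:integral_property}~(\ref{lem:integral_property1}), although $H_\alpha$ is non-semisimple.
\end{itemize}
In your notation with $\rho=\alpha$, $V=\Bbbk\varepsilon_\alpha$, so $\Lambda_\alpha|_V=\varepsilon_\alpha(\Lambda_\alpha)=0$. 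Separately, your socle step is reversed: if $\Lambda_\rho\in U^\perp$, then $\Lambda_\rho=\Lambda_\rho e\in U^\perp e$, so $U^\perp e\ne0$.

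The missing idea is that non-degeneracy gives you all three conditions, and the two restrictions are nonzero because of the \emph{other two}. Indeed
\[
E_\mu=\sigma_{\mu\nu}(\Lambda_{\mu(1)},S_\nu(\Lambda_\nu))\,\sigma_{\rho\mu}(S_\rho(\Lambda_\rho),\Lambda_{\mu(2)}),\qquad
E_\nu=\sigma_{\nu\rho}(\Lambda_{\nu(1)},S_\rho(\Lambda_\rho))\,\sigma_{\mu\nu}(S_\mu(\Lambda_\mu),\Lambda_{\nu(2)}).
\]
By (\ref{def:skewpairing5}),
\[
\sigma_{\rho\mu}(S_\rho(\Lambda_\rho),y)=\Lambda_\rho\bigl(\sigma_{\rho\mu,r}(S_\mu^{-1}(y))\bigr),\qquad
\sigma_{\nu\rho}(x,S_\rho(\Lambda_\rho))=\Lambda_\rho\bigl(\sigma_{\nu\rho,l}(S_\nu^{-1}(x))\bigr).
\]
Hence $E_\mu\ne0$ gives $\Lambda_\rho|_U\ne0$, and $E_\nu\ne0$ gives $\Lambda_\rho|_V\ne0$. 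With these two inputs the rest of your argument goes through verbatim. It is then the paper's proof in contrapositive form: the paper shows that if $H_\rho$ is non-semisimple, $E_\mu\ne0$ and $E_\nu\ne0$, then $E_\rho=0$.
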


\begin{proof}
    For the `if' part, since a finite-dimensional semisimple Hopf algebra over a field of characteristic \(0\) is involutory, i.e., \(S^2=\id\) \cite[Theorem~4]{larson1988semisimple}, we have \(\Psi_{\mu\nu} = \Phi_{\mu\nu}\) (\(\mu\nu\in\{\alpha\beta,\beta\gamma,\gamma\alpha\}\)), and hence each \(\Psi_{\mu\nu}\) is a Hopf algebra map.

    For \(\mu\nu\rho\in\{\alpha\beta\gamma,\beta\gamma\alpha,\gamma\alpha\beta\}\), set the Hopf subalgebra \(H\coloneqq \Psi_{\mu\nu}(D_{\sigma_{\mu\nu}}(H_\mu^\mathrm{op},H_\nu^\mathrm{cop}))\subseteq H_\rho^*\). Then \(\Psi_{\mu\nu}(S_\mu(\Lambda_\mu)\otimes S_\nu(\Lambda_\nu))\) is a nonzero (two-sided) integral of \(H\), and \(\Lambda_\rho|_H\) is a nonzero (two-sided) integral of \(H^*\). Indeed, 
    \[
        \varepsilon_H(\Psi_{\mu\nu}(S_\mu(\Lambda_\mu)\otimes S_\nu(\Lambda_\nu)))=\varepsilon_\mu(\Lambda_\mu)\varepsilon_\nu(\Lambda_\nu)\ne0,\quad\text{and}\quad(\Lambda_\rho|_H)(1_H)=\varepsilon_\rho(\Lambda_\rho)\ne0
    \]
    by \cref{lem:integralss}. Hence, by \cref{lem:integral_property}~(\ref{lem:integral_property1}), \(\Psi_{\mu\nu}(S_\mu(\Lambda_\mu)\otimes S_\nu(\Lambda_\nu))(\Lambda_\rho)\ne0\).

    For the `only if' part, set
    \begin{align*}
        E_\gamma &\coloneqq \Psi_{\alpha\beta}(S_\alpha(\Lambda_\alpha)\otimes S_\beta(\Lambda_\beta))(\Lambda_\gamma)=\sigma_{\gamma\alpha}(\Lambda_{\gamma(1)},S_\alpha(\Lambda_\alpha))\sigma_{\beta\gamma}(S_\beta(\Lambda_\beta),\Lambda_{\gamma(2)}),\\
        E_\alpha &\coloneqq \Psi_{\beta\gamma}(S_\beta(\Lambda_\beta)\otimes S_\gamma(\Lambda_\gamma))(\Lambda_\alpha)=\sigma_{\alpha\beta}(\Lambda_{\alpha(1)},S_\beta(\Lambda_\beta))\sigma_{\gamma\alpha}(S_\gamma(\Lambda_\gamma),\Lambda_{\alpha(2)}),\\
        E_\beta &\coloneqq \Psi_{\gamma\alpha}(S_\gamma(\Lambda_\gamma)\otimes S_\alpha(\Lambda_\alpha))(\Lambda_\beta)=\sigma_{\beta\gamma}(\Lambda_{\beta(1)},S_\gamma(\Lambda_\gamma))\sigma_{\alpha\beta}(S_\alpha(\Lambda_\alpha),\Lambda_{\beta(2)}).
    \end{align*}
    We show that for \(\mu\nu\rho\in\{\alpha\beta\gamma,\beta\gamma\alpha,\gamma\alpha\beta\}\), if \(H_\rho\) is non-semisimple, \(E_\mu\ne0\), and \(E_\nu\ne0\), then \(E_\rho=0\).

    Set
    \[
        H\coloneqq H_\rho^*,\quad\lambda\coloneqq \Lambda_\rho\in H^*,\quad U\coloneqq \sigma_{\rho\mu,r}(H_\mu^{\mathrm{op}})\subseteq H,\quad\text{and}\quad V\coloneqq \sigma_{\nu\rho,l}(H_\nu^{\mathrm{cop}})\subseteq H.
    \]
    By (\ref{def:skewpairing5}), \(E_\mu\ne0\) implies \(\lambda|_U\ne0\), and \(E_\nu\ne0\) implies \(\lambda|_V\ne0\). Thus, by \cref{lem:cointonsubhopf}, \(\lambda|_K\ne0\) (\(K\coloneqq U\cap V\)). Since \(\lambda|_K\) is a nonzero integral of \(K^*\) and \(H_\rho\) is non-semisimple,
    \[
        \lambda(1_K)=\varepsilon_\rho(\Lambda_\rho)=0,
    \]
    so that \(K^*\) is non-semisimple by \cref{lem:integralss}. Since the ground field \(\Bbbk\) has characteristic zero, \(K\) is also non-semisimple~\cite[Theorem~3.3]{larson1988finite}. From the observations that \(u\coloneqq \sigma_{\rho\mu,r}(S_\mu(\Lambda_\mu))\in U\) is a left integral of \(U\), and that \(v\coloneqq \sigma_{\nu\rho,l}(S_\nu(\Lambda_\nu))\in V\) is a right integral of \(V\), we have \(uv=0\) by \cref{lem:intsubhopf}. Hence,
    \[
        E_\rho=\lambda(uv)=0,
    \]
    as desired. This completes the proof. We note that this part does not use the triangle identity~(\ref{def:hopftriplet3}).
\end{proof}

\begin{remark}
    Finally, we comment on the semisimple trisection invariant of \cite{chaidez20224}. In their construction, they impose the admissibility condition that the evaluation of the standard genus-\(3\) trisection diagram of \(S^4\) admits a cube root in \(\Bbbk^\times\). It is easy to see that this evaluation is equal to \(E_\alpha E_\beta E_\gamma\), which is nonzero in the semisimple setting by \cref{thm:nondegvsss}. Thus, if \(\Bbbk\) is algebraically closed, this admissibility condition is automatically satisfied.
\end{remark}

\printbibliography[heading=bibintoc]
\end{document}